\DeclareMathOperator{\inte}{int}
\DeclareMathOperator{\gph}{gph}
\DeclareMathOperator{\Sol}{SOL}
\DeclareMathOperator{\CP}{CP}
\DeclareMathOperator{\A}{\mathcal{A}}
\DeclareMathOperator{\B}{\mathcal{B}}
\DeclareMathOperator{\Bo}{\mathbb{B}}
\DeclareMathOperator{\Hd}{\mathcal{H}}
\DeclareMathOperator{\Po}{\mathcal{P}}
\DeclareMathOperator{\Ri}{\mathcal{R}}
\DeclareMathOperator{\Oo}{\mathcal{O}}
\DeclareMathOperator{\Sa}{\mathbb{S}}
\DeclareMathOperator{\Sc}{\mathcal{S}}
\DeclareMathOperator{\VI}{VI}
\DeclareMathOperator{\rank}{rank}
\DeclareMathOperator{\R}{\mathbb{R}}
\begin{document}
	
\title{Solution maps of polynomial variational inequalities}
	
\author{Vu Trung Hieu}

\institute{Vu Trung Hieu \at Sorbonne Universit\'e, \textsc{CNRS}, \textsc{LIP6}, F-75005, Paris, France\\
	E-mail: trung-hieu.vu@lip6.fr}

\date{In memory of my teacher Vu Thi Hai (1968-1991) \\ \\
Received: date / Accepted: date}
	
\maketitle
	
\begin{abstract}
In this paper, we investigate several properties of the solution maps of variational inequalities with polynomial data. First, we prove some facts on the $R_0$-property, the local boundedness, and the upper semicontinuity of the solution maps. Second, we establish results on the solution existence and the local upper-H\"{o}lder stability under the copositivity condition. Third, when the constraint set is semi-algebraic, we discuss the genericity of the $R_0$-property and the finite-valuedness of the solution maps. 
\end{abstract}

	\keywords{Polynomial variational inequality \and  $R_0$-property \and Upper semicontinuity \and  Local upper-H\"{o}lder stability \and Genericity \and Semi-algebraic set}
	
	\subclass{90C33 \and 90C31\and 14P10  \and 54C60}
	
\section{Introduction} 

Polynomial complementarity problems and polynomial variational inequalities have been recently investigated by many authors, see, for example 
\cite{Gowda16,HYY2015a,LLP2018,SongQi2015} and the references therein. These problems are natural extensions of affine variational inequalities (see \cite{LTY2005}  and the references therein) and special cases of weakly homogeneous variational inequalities introduced by Gowda and Sossa \cite{GoSo18}.

In the recent past, several authors studied different properties of polynomial complementarity problems, tensor complementarity problems and tensor variational inequalities, which are subclasses of polynomial variational inequalities. In particular, the solvability, the global uniqueness, and the boundedness of the solution sets have been studied in \cite{BHW2016,Gowda16,LHL2017,SongYu2016,WHQ18}. In \cite{Hieu18}, the author discussed the upper semicontinuity and the finite-valuedness of the solution maps of tensor complementarity problems. 
Very recently, the authors \cite{LLP2018} investigated the genericity and the H\"{o}lder stability for semi-algebraic variational inequalities where the related maps are the sums of semi-algebraic maps and parametric vectors. 

In this paper, we investigate several properties of the solution maps of variational inequalities with polynomial data. First,  we introduce the $R_0$-property and show that it plays an important role in the investigation. Since polynomial maps are weakly homogeneous, the normalization argument (see, e.g. \cite{AusTeb2003,GoSo18,OettliYen95}) can be applied. Several facts on the local boundedness and the upper semicontinuity of the solution maps are shown. Second, we obtain a result on the existence of solutions under the copositivity. We develop the results on solution stability of copositive  linear complementarity problems and affine variational inequalities in \cite{CPS1992,LTY2005} for copositive polynomial variational inequalities. 
Third, when the constraints are polynomial, techniques from semi-algebraic geometry (see, e.g. \cite{BCF98}) and differential geometry (see, e.g. \cite{Loring_2010})  can be used. Under some mild
conditions of the constraint set, we prove the genericity of the $R_0$-property and the finite-valuedness of the solution maps.

The present paper is organized as follows: In the next section, we give a short introduction to variational inequalities, asymptotic cones, polynomial maps, and semi-algebraic sets. In Section \ref{sec:usc}, we investigate the $R_0$-property and the upper semicontinuity of the solution maps. In Section \ref{sec:exist}, we prove some facts on the solution existence and stability of copositive polynomial variational inequalities. In the last section, two results on genericity are shown.

\section{Preliminaries}

This section gives a short introduction to variational inequalities, asymptotic cones, polynomial maps, and semi-algebraic sets.

\subsection{Variational inequalities and asymptotic cones}
The usual scalar product of two vectors $x, y\in\R^n$ is denoted by $\langle x,y\rangle$. Let $K$ be a nonempty closed convex subset of $\R^n$ and $F: \R^n\to\R^n$ be a continuous vector-valued map. The \textit{variational inequality} defined by $K$ and $F$ is the problem:
$$
{\rm Find} \  x\in K\ {\rm such\ that}\ \langle
F(x),y-x\rangle\geq 0,\  \forall y\in K.
$$
The problem and the corresponding solution set are denoted by $\VI(K,F)$ and $\Sol(K,F)$, respectively.  
By the continuity of $F$ and the closedness of $K$, it is not difficult to check that $\Sol(K,F)$ is closed.

If $F$ is a polynomial map then $\VI(K,F)$ is called a \textit{semi-polynomial variational inequality}. Furthermore, if $K$ is defined by finitely many polynomial equations and inequalities, then the problem is a \textit{polynomial variational inequality}.

\begin{theorem}\label{thm:HS}{\rm(The Hartman-Stampacchia Theorem, \cite[Chapter 1, Theorem 3.1]{KindStam})}
	If $K$ is compact, then the solution set $\Sol(K,F)$ is nonempty.
\end{theorem}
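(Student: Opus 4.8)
The plan is to reduce the existence of a solution to Brouwer's fixed point theorem by means of the metric projection onto $K$. Since $K$ is nonempty, closed, and convex, for every $u\in\R^n$ there is a unique point $\Pi_K(u)\in K$ minimizing $\|u-\cdot\|$ over $K$; I would recall the variational characterization $z=\Pi_K(u)$ if and only if $z\in K$ and $\langle u-z,\,y-z\rangle\le 0$ for all $y\in K$, together with the fact that $\Pi_K$ is nonexpansive, hence continuous.

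First I would introduce the \emph{natural map} $\Phi\colon K\to K$ given by $\Phi(x)=\Pi_K\bigl(x-F(x)\bigr)$. Because $F$ is continuous and $\Pi_K$ is continuous, $\Phi$ is continuous; its range is contained in $K$; and $K$ is a nonempty compact convex subset of $\R^n$. Brouwer's fixed point theorem then furnishes a point $x^\ast\in K$ with $x^\ast=\Pi_K\bigl(x^\ast-F(x^\ast)\bigr)$.

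Second, I would unwind this fixed-point equation by applying the projection inequality with $u=x^\ast-F(x^\ast)$ and $z=x^\ast$: this gives $\langle (x^\ast-F(x^\ast))-x^\ast,\,y-x^\ast\rangle\le 0$ for all $y\in K$, that is, $\langle F(x^\ast),\,y-x^\ast\rangle\ge 0$ for all $y\in K$. Hence $x^\ast\in\Sol(K,F)$, so $\Sol(K,F)\neq\emptyset$.

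The argument is essentially routine once these two ingredients are in place, so there is no serious obstacle; the only points that demand care are the well-definedness and continuity of the metric projection onto a closed convex set (single-valuedness from convexity, existence from closedness and nonemptiness, continuity from the nonexpansive estimate). An alternative route avoiding Brouwer's theorem is to apply the Ky Fan / KKM lemma to the set-valued map $y\mapsto\{x\in K:\langle F(x),\,y-x\rangle\ge 0\}$, verifying that it has closed values and is a KKM map and then using compactness of $K$ to produce a point in the intersection of its values; I would mention this as a remark but carry out the projection-based proof.
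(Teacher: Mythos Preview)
Your proof is correct and is essentially the classical argument found in Kinderlehrer--Stampacchia. Note, however, that the paper does not give its own proof of this statement: Theorem~\ref{thm:HS} is simply quoted from \cite[Chapter~1, Theorem~3.1]{KindStam} and used as a black box, so there is no ``paper's proof'' to compare against. Your projection/Brouwer approach is exactly the standard one behind that reference, and the KKM alternative you mention is the other well-known route.
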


Let us recall that a nonempty set $C\subset \R^m$ is called a \textit{cone} if $\lambda>0$ and $x\in C$ then $\lambda x\in C$. The cone $C$ is bounded if and only if $C=\{0\}$. Note that $C$ is a cone if and only if $\R^m\setminus C$ is a cone. We denote by $\inte C$ and $C^*$ the interior and the  dual cone of $C$, respectively.

When (the closed convex set) $K$ is a cone, the \textit{complementarity problem} defined by $K$ and $F$, denoted by $\CP(K,F)$, is to find a vector $x\in\R^n$
satisfying the following conditions:
\begin{equation}\label{CP} x\in K, \ 
F(x)\in K^*, \ \langle
F(x),x\rangle=0.
\end{equation}
In this setting, it is known that a vector $x$ solves $\CP(K,F)$  if and only if $x$ solves  $\VI(K,F)$ \cite[Proposition 1.1.3]{FaPa03}. Therefore,  the solution set of $\CP(K,F)$ is also denoted by $\Sol(K,F)$.

\begin{remark} It is easy to see that a vector $x$ solves  $\CP(K,F)$ if and only if there is $\lambda\in\R^n$ such that
\begin{equation}\label{CP_sys1} x\in K, \ F(x)-\lambda=0,  \ \langle
	\lambda,x\rangle=0, \ \langle
	\lambda,y\rangle\geq 0 \  \forall y\in K.
	\end{equation}
\end{remark}

\begin{remark} If $K$ is a cone and $F$ is homogeneous of degree $d>0$, i.e. $F(tx)=t^{d}F(x)$ for all $t> 0$ and $x\in\R^n$, then the solution set of $\CP(K,F)$ contains $0$ and is a closed cone.
\end{remark}

The \textit{asymptotic cone} of $K$ is defined by 
$$
K^{\infty}=\left\lbrace v\in\R^n:\exists t_k\to \infty, \exists x_k\in K \text{ with } \lim_{k\to\infty} \frac{x_k}{t_k}=v\right\rbrace.
$$
By the convexity of $K$, $K^{\infty}$ coincides with  the recession cone  of $K$ which
is defined by the set of vectors $v \in\R^n$ such that for some vector
$x\in K$ the ray $\{x+tv: t\geq 0\}$ is contained in $K$ \cite[p.158]{FaPa03}. So, one has $K=K+K^{\infty}$. Recall that the cone $K^{\infty}$ is closed and convex;  $K$ is bounded if and only if $K^{\infty}=\{0\}$; if $K$ is a cone then $K^{\infty}=K$.

Let $P$ be a polynomial map, i.e. $P=(P_1,\dots,P_n)$ where $P_l$ is polynomial in $n$ variables, $l=1,\dots,n$. The maximum of the numbers $\deg P_l$ is called the \textit{degree} of the polynomial map $P$ and one denotes $\deg P=d$. 
We denote $P^{\infty}=(P_1^{\infty},\dots,P_m^{\infty})$, where $P_l^{\infty}$ is the homogeneous component of degree $d$ of $P_l$, $l=1,\dots,n$. Clearly, $P^{\infty}$ is the \textit{leading term} of the polynomial map $P$, i.e.
$$
P^{\infty}(x)=\lim_{\lambda\to+\infty}\frac{P(\lambda x)}{\lambda^d}, \ \forall x\in\R^n,
$$
and the map $P^{\infty}$ is homogeneous of degree $d$. 

\textit{Throughout the paper, we assume that $K$ is a nonempty, closed, and convex set, $P$ is a polynomial map of degree $d$, where $d$ is a positive integer, and let $\Sc:=\Sol(K^{\infty},P^{\infty})$}. 

Let $\Po_{d}$ be the linear space of all polynomial maps $Q=(Q_1,\dots,Q_n)$ of degree at most $d$, $m$ be the dimension of $\Po_{d}$, and $X$ be the vector consisting of all monomials degree at most $d$ which is listed by the lexicographic ordering \begin{equation}\label{monomials}
X := (1,x_1,x_2,\dots,x_n, x_1^2, x_1x_2,\dots,x_1x_n,\dots,x_1^d,x_1^{d-1}x_2,\dots,x_n^d)^T.
\end{equation}
For any polynomial map $Q\in\Po_d$, there exists a unique matrix  $A\in\R^{n\times m}$,
\begin{equation}\label{A}
A=\begin{bmatrix}
a_{11}& a_{12} & \cdots & a_{1m}  \\ 
a_{21}& a_{22} & \cdots & a_{2m}  \\ 
&  & \vdots &  \\ 
a_{n1}& a_{n2} & \cdots & a_{nm} 
\end{bmatrix},
\end{equation}
such that $Q(x)=AX$. The norm $\|Q\|$ of $Q$ is defined by the Frobenius norm of the coefficients of
matrix $A$. Let $\{Q^k\}$ be a convergent sequence in $\Po_d$ with $Q^k\to Q$ and $\{x^k\}$ be a convergent sequence in $\R^n$ with $x^k\to \bar x$. Then $Q^k(x^k)$ also is convergent with $Q^k(x^k)\to Q(\bar x)$.

\begin{remark}\label{P_infty}
Let $\{Q^k\}$ be a sequence in $\Po_{d}$ with $Q^k\to P$. Assume that $Q^k(x)=A^kX$ and $P(x)=BX$. Clearly, one has $A^k \to B$. It follows that $(Q^k)^{\infty}\to P^{\infty}$.
\end{remark}

The $R_0$-property of linear complementarity problems and affine variational inequalities has been investigated in \cite{JaTi87,OettliYen95} and \cite[p.189]{FaPa03}. We introduce a generalization of this property for semi-polynomial variational inequalities.

\begin{definition} One says that the problem $\VI(K,P)$ has the $R_0$-\textit{property} or $(K,P)$ is an \textit{$R_0$-pair} if the cone $\Sc$ is trivial, i.e., $\Sol(K^{\infty},P^{\infty})=\{0\}$.
\end{definition}

\begin{remark}\label{P_Q} Let $Q\in\Po_{d-1}$. Then $(P+Q)^{\infty}=P^{\infty}$. Furthermore, if $(K,P)$ is an $R_0$-pair, then $(K,P+Q)$ also is an $R_0$-pair. 
\end{remark}
\begin{remark}\label{K_compact} If $K$ is compact, then $K^{\infty}=\{0\}$, and hence $(K,P)$ is an $R_0$-pair.
\end{remark}

Let $\Ri_0(K,d)$ be the set of all polynomial maps $Q$ of degree $d$ such that $(K,Q)$ is an $R_0$-pair.

\begin{remark}The set $\Ri_0(K,d)$ is a cone. Indeed, for each $t>0$ and each polynomial map $Q$, one has 
$(tQ)^{\infty} =tQ^{\infty}$.
This implies that $$\Sol(K^{\infty},(tQ)^{\infty})=\Sol(K^{\infty},tQ^{\infty}).$$
 Moreover, it is easy to check that above sets coincide with $\Sol(K^{\infty},Q^{\infty})$, i.e.,
$$\Sol(K^{\infty},(tQ)^{\infty})=\Sol(K^{\infty},tQ^{\infty})=\Sol(K^{\infty},Q^{\infty}).$$
Therefore, $\Sol(K^{\infty},Q^{\infty})$ is bounded iff $\Sol(K^{\infty},(tQ)^{\infty})$ is bounded, for any $t>0$. This implies that $\Ri_0(K,d)$ is a cone in $\Po_{d}$.
\end{remark}

This paper mostly focuses on two solution maps $\Sol$ and $\Sol_P$ which are respectively defined by
\begin{equation}\label{Sol}
\Sol:\Po_{d}\rightrightarrows \R^n, \ Q\to \Sol(K,Q),
\end{equation}
and
\begin{equation}\label{Sol_P}
\Sol_{P}:\R^n\rightrightarrows\R^n, \ p\to \Sol(K,P+p).
\end{equation}

\begin{remark}\label{cl_graph}
By definition, it is easy to see that the map $\Sol$ is closed, i.e. the graph of $\Sol$, which is defined by
	$$\gph(\Sol)=\big\{(Q,x)\in \Po_{d}\times \R^n: x\in \Sol(K,Q)\big\},$$
	is closed in $\Po_{d}\times \R^n$. Similarly, the map $\Sol_{P}$ also is closed.
\end{remark}

\subsection{Semi-algebraic sets and the LICQ}

Recall that a set in $\R^n$ is \textit{semi-algebraic}, if it is the union of finitely many subsets of the form
\begin{equation*}\label{basicsemi}
\big\{x\in \R^n\,:\,f_1(x)=\dots=f_\ell(x)=0,\ g_{\ell+1}(x)<0,\dots,g_m(x)<0\big\},
\end{equation*}
where $\ell,m$ are natural numbers, and $f_1,\dots, f_\ell, g_{\ell+1},\dots,g_m$ are polynomials with real coefficients. For further details on semi-algebraic geometry, we refer to \cite{BCF98}.

Suppose that $S_1\subset \R^m$ and $S_2\subset \R^n$ are semi-algebraic sets. A vector-valued map $G:S_1\to S_2$ is said
to be semi-algebraic \cite[Definition 2.2.5]{BCF98}, if its graph is a semi-algebraic subset in $\R^m\times\R^n$.  

Let $S\subset \R^m$ be a semi-algebraic set. Then there exists a decomposition of $S$ into a disjoint union of semi-algebraic subsets \cite[Theorem 2.3.6]{BCF98} $$S=\bigcup_{i=1}^sS_i,$$ where each $S_i$ is semi-algebraically homeomorphic to $(0,1)^{d_i}$, i.e., there is a map $h:S_i \to (0,1)^{d_i}$ such that $h$ is semi-algebraic and homeomorphic. Let $(0,1)^{0}$ be a point and  $(0,1)^{d_i}\subset \R^{d_i}$ be the set of points $x=(x_1,\dots,x_{d_i})$ such that $x_j\in (0,1)$ for all $j=1,\dots,d_i$. The \textit{dimension} of $S$ is defined by
$$\dim S:=\max\{d_1,\dots,d_s\}.$$		
The dimension is well-defined and does not depend on the decomposition of $S$. 

If the semi-algebraic set $S$ is nonempty and $\dim S=0$, then  $S$ has finitely many points. If $\dim(\R^m\setminus S)<m$, then $S$ is \textit{generic} in $\R^m$ in the sense that $S$ contains a countable intersection of dense and open sets in $\R^m$.

Let $X\subset \R^m$ and $Y\subset \R^n$ be manifolds.
The tangent spaces of $X$ at $x$ and of $Y$ at $y$ are denoted by $T_xX$ and $T_yY$, respectively. Consider the differentiable map $\Phi:X \to Y$. A point $y \in \R^n$ is called a \textit{regular value} of $\Phi$ if either the level set $\Phi^{-1}(y)$ is empty or the derivative
map 
$$D\Phi(x): T_xX\to T_yY$$
is surjective at every point $x \in \Phi^{-1}(y)$. So, $y$ is a regular value of $\Phi$ if and only if $\rank D\Phi(x)=n$ for all $x\in \Phi^{-1}(y)$.

\begin{remark}
	Consider a differentiable semi-algebraic map $\Phi:X \to \R^n$, where $X\subset \R^n $. Assume that $y \in \R^n$ is a regular value of $\Phi$ and $\Phi^{-1}(y)$ is nonempty. Applying the regular level set theorem \cite[Theorem 9.9]{Loring_2010}, one has  $\dim\Phi^{-1}(y)=0$; this implies that $\Phi^{-1}(y)$ has finitely many points.
\end{remark}

\begin{remark}
	Let $\Phi : \R^m\times X\to \R^n$ be a differentiable semi-algebraic map, where $X\subset \R^n $. Assume that $y \in \R^n$ is a regular value of $\Phi$. The Sard theorem with parameter \cite[Theorem 2.4]{DHP16} says that there is a generic
	semi-algebraic set $\Sa\subset\R^m$ such that, for every $p\in \Sa$, $y$ is a regular value of the map
	$\Phi_p:X\to \R^n$ with $x \mapsto \Phi(p,x).$
\end{remark}


Suppose that the constraint $K$ is represented by finitely many convex polynomial functions $g_i(x),i\in I,$ and finitely many affine functions $h_j(x),j\in J,$ as follows:
\begin{equation}\label{K}
K = \left\lbrace x \in \R^n : g_i(x) \leq 0, i\in I, \; h_j(x) = 0, j \in J\right\rbrace.
\end{equation}
For each index set $\alpha \subset I=\{1,\dots,s\}$, the \textit{pseudo-face} $K_{\alpha}$ of $K$  is defined by 
$$
K_{\alpha}=\left\lbrace x\in \R^n:g_i(x) =0, \forall i\in\alpha, g_i(x) <0, \forall i\notin\alpha, h_j(x) = 0,  \forall j\in J\right\rbrace.
$$
The number of pseudo-faces of $K$ is finite and these pseudo-faces  establish a disjoint decomposition of $K$. So, we obtain
\begin{equation}\label{Sol_decom}
\Sol(K,F)=\displaystyle\bigcup_{\alpha\subset I}\left[ \Sol(K,F)\cap K_{\alpha}\right].
\end{equation}

For each $x\in K$, denote by $I(x)$ the active index set at $x$ which is defined by
$I(x)= \{i\in I : g_i(x) = 0\}.$  
One says that $K$ satisfies the \textit{linearly independent constraint qualification} ($\rm LICQ$ for short), if the gradient vectors $$\{\nabla g_i(x), \nabla h_j(x), i\in I(x),j\in J\}$$ are linearly independent for all point $x\in K$. If the $\rm LICQ$ holds on $K$, then the Abadie constraint qualification (see, e.g. \cite[p.~17]{FaPa03}) also holds on $K$. Hence, the Karush-Kuhn-Tucker conditions can be applied (see, e.g. \cite[Proposition 1.3.4]{FaPa03}).

\section{Upper semicontinuity of solution maps}\label{sec:usc}

This section focuses on upper semicontinuity of the solution maps $\Sol$ and $\Sol_P$ given by \eqref{Sol} and \eqref{Sol_P}, respectively.  A close relation between the upper semicontinuity and the $R_0$-property is shown.

\subsection{Local boundedness}

The following theorem describes a relation between the $R_0$-property and the boundedness of certain solution sets.

\begin{proposition}\label{bounded1} Consider the following statements:
	\begin{description}
		\item[\rm(a)] $(K,P)$ is an $R_0$-pair;
		\item[\rm(b)] $\Sol(K,P+Q)$ is bounded, for every $Q\in\Po_{d-1}$;
		\item[\rm(c)] For any bounded open set  $\Oo\subset \Po_{d-1}$, the following set is bounded:
		$$S_{\Oo}:=\bigcup_{Q\in \Oo} \Sol(K,P+Q).$$
	\end{description}
	One has $ \rm(a) \Rightarrow \rm(c) \Rightarrow \rm(b)$. Moreover,  if $K$ is a cone then the three statements are equivalent.
\end{proposition}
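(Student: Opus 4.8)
The plan is to prove the implications (a) $\Rightarrow$ (c) $\Rightarrow$ (b) with no extra hypothesis, and then to add (b) $\Rightarrow$ (a) when $K$ is a cone, which closes the equivalence. The implication (c) $\Rightarrow$ (b) is immediate: given $Q\in\Po_{d-1}$, apply (c) to any bounded open ball $\Oo$ about $Q$ in $\Po_{d-1}$ and note that $\Sol(K,P+Q)\subseteq S_{\Oo}$.

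For (a) $\Rightarrow$ (c) I would use a normalization (blow-up) argument. Suppose (c) fails for some bounded open $\Oo\subset\Po_{d-1}$; then there are $Q^k\in\Oo$ and $x^k\in\Sol(K,P+Q^k)$ with $t_k:=\|x^k\|\to\infty$, and after passing to a subsequence $v^k:=x^k/t_k\to v$ with $\|v\|=1$. Since $x^k\in K$, the definition of the asymptotic cone gives $v\in K^\infty$. Next I would show that
$$
\frac{P(x^k)+Q^k(x^k)}{t_k^{d}}\;\longrightarrow\;P^\infty(v).
$$
Indeed, writing $P=P^\infty+\widetilde P$ with $\deg\widetilde P\le d-1$, the homogeneity $P^\infty(t_kv^k)=t_k^{d}P^\infty(v^k)$ handles the leading term, while every monomial appearing in $\widetilde P$ or in $Q^k$ has degree $j\le d-1$ and thus contributes a factor $t_k^{\,j-d}\to 0$ against coefficients that stay bounded (those of $Q^k$ because $\Oo$ is bounded) and the bounded vectors $v^k$. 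Then I would feed two families of test vectors into $\langle P(x^k)+Q^k(x^k),y-x^k\rangle\ge 0$: with $y=x^k+u$, $u\in K^\infty$ (which lies in $K$ since $K=K+K^\infty$), dividing by $t_k^{d}$ and letting $k\to\infty$ gives $\langle P^\infty(v),u\rangle\ge 0$ for all $u\in K^\infty$, i.e. $P^\infty(v)\in(K^\infty)^*$; with $y=x_0$ for a fixed $x_0\in K$, dividing by $t_k^{d+1}$ and letting $k\to\infty$ gives $\langle P^\infty(v),v\rangle\le 0$. Since $v\in K^\infty$ and $P^\infty(v)\in(K^\infty)^*$ also force $\langle P^\infty(v),v\rangle\ge 0$, equality holds, whence $\langle P^\infty(v),u-v\rangle=\langle P^\infty(v),u\rangle\ge 0$ for all $u\in K^\infty$; that is, $v\in\Sol(K^\infty,P^\infty)=\Sc$ with $v\ne 0$, contradicting (a).

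For (b) $\Rightarrow$ (a) when $K$ is a cone, I would argue by contraposition. If $(K,P)$ is not an $R_0$-pair, then $\Sc=\Sol(K^\infty,P^\infty)=\Sol(K,P^\infty)$ (using $K^\infty=K$) is a cone containing a nonzero vector, hence unbounded. Taking $Q:=P^\infty-P=-\widetilde P\in\Po_{d-1}$ gives $P+Q=P^\infty$, so $\Sol(K,P+Q)=\Sol(K,P^\infty)$ is unbounded, contradicting (b).

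The step I expect to be the main obstacle is the passage to the limit in (a) $\Rightarrow$ (c): because the scale $t_k$ and the direction $v^k$ vary simultaneously, $t_k^{-d}P(x^k)\to P^\infty(v)$ is not literally the limit defining $P^\infty$, so one must separate the leading homogeneous component from a lower-order remainder and estimate the remainder uniformly, and the same care is needed for the moving perturbations $Q^k$, where only boundedness of the coefficients is available. Choosing the correct normalizing powers — $t_k^{d}$ against the recession direction $u$ and $t_k^{d+1}$ against the fixed point $x_0$ — is the other delicate bookkeeping point.
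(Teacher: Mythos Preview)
Your proof is correct and follows essentially the same normalization (blow-up) strategy as the paper. The only minor difference is that in (a)~$\Rightarrow$~(c) the paper uses the single test vector $y=u+\|x^k\|v$ (with $u\in K$ fixed and $v\in K^\infty$ arbitrary) and a single division by $\|x^k\|^{d+1}$ to obtain $\langle P^\infty(\bar x),v-\bar x\rangle\ge 0$ directly, whereas you split this into two families of test vectors with two different scalings; both routes are equally valid, and your (b)~$\Rightarrow$~(a) argument coincides with the paper's (in fact with the correct sign $Q=P^\infty-P$).
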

\begin{proof} $\rm(a) \Rightarrow \rm(c)$ Assume that $(K,P)$ is an $R_0$-pair and, on the contrary,  there is a bounded open set $\Oo$ such that $S_{\Oo}$ is unbounded. There exists an unbounded sequence $\{x^k\}\subset K$ and a sequence 
	$\{Q^k\}\subset\Oo$
	such that $x^k\in\Sol(K,P+Q^k)$ for every $k$. By the unboundedness of $\{x^k\}$, without loss of generality, we can assume that $\|x^k\|^{-1}x^k\to\bar x $ with $\|\bar x\|=1.$ Because of the boundedness of $\{Q^k\}$, we can suppose that $Q^k\to \overline Q$ with $\deg \overline Q\leq d-1$. 
	
	By assumptions, one has 
	\begin{equation}\label{VI_k}
	\left\langle (P+Q^k)(x^k),y-x^k \right\rangle\geq 0, \ \forall y\in K.
	\end{equation}
	Let $u\in K$ be fixed. Then, for every $v\in K^{\infty}$, we have $u+\|x^k\|v\in K$ for any $k$. From \eqref{VI_k}, we deduce that
	$$\left\langle (P+Q^k)(x^k),u+\|x^k\|v-x^k \right\rangle\geq 0.$$
	Dividing this inequality by $\|x^k\|^{d+1}$ and letting $k\to+\infty$, we obtain
	$$\left\langle (P+\overline Q)^{\infty}(\bar x),v-\bar x \right\rangle=\left\langle P^{\infty}(\bar x),v-\bar x \right\rangle\geq 0,$$
and hence it follows that $\bar x \in \Sc=\{0\}$. As $\|\bar x\|=1$, this is a contradiction; therefore, $S_{\Oo}$ must be bounded. The assertion $\rm(c)$ is proved.	
	
	$\rm(c) \Rightarrow \rm(b)$ Assume that $\rm(c)$ holds, but there is $\overline Q\in\Po_{d-1}$ such that $\Sol(K,P+\overline Q)$ is unbounded. Then there exists a bounded and open set $\Oo$ such that $\overline Q\in \Oo$. Clearly, 
	$$\Sol(K,P+\overline Q)\subset S_{\Oo}.$$
This is impossible, because $S_{\Oo}$ is bounded; hence $\rm(b)$ follows. 
	
	Assuming that $K$ is a cone, we need only to prove that $\rm(b) \Rightarrow \rm(a)$. Consider $Q:=P-P^{\infty}\in\Po_{d-1}$. Then the assertion $\rm (b)$ implies that $\Sol(K,P^{\infty})$ is a bounded cone; consequently, we get  $\Sol(K,P^{\infty})=\{0\}$. The proof is complete. \qed
\end{proof}

The following lemma will be used in the proof of Theorem \ref{bounded2}.

\begin{lemma}\label{open_cone} The cone $\Ri_0(K,d)$ is open in $\Po_d$.
\end{lemma}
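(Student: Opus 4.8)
The plan is to prove openness by a sequential argument, essentially the normalization argument already used for Proposition~\ref{bounded1}. Since $\Po_d$ is a finite-dimensional normed space, $\Ri_0(K,d)$ fails to be open only if there exist $\bar Q\in\Ri_0(K,d)$ and a sequence $\{Q^k\}\subset\Po_d\setminus\Ri_0(K,d)$ with $Q^k\to\bar Q$; I would assume this and derive a contradiction by exhibiting a nonzero vector in $\Sol(K^\infty,\bar Q^\infty)$.

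First I would reduce to the case $\deg Q^k=d$ for all $k$. As $\deg\bar Q=d$, some component $\bar Q_l$ has a monomial of degree $d$ with nonzero coefficient; the corresponding coefficient of $Q^k_l$ converges to this nonzero number, hence is nonzero for all large $k$, so $\deg Q^k=d$ for all large $k$ (recall $Q^k\in\Po_d$). Discarding finitely many terms, I may assume $\deg Q^k=d$ for every $k$, and then $Q^k\notin\Ri_0(K,d)$ means exactly that $\Sol(K^\infty,(Q^k)^\infty)\neq\{0\}$.

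Next I would normalize. Since $K^\infty$ is a cone and $(Q^k)^\infty$ is homogeneous of degree $d>0$, the set $\Sol(K^\infty,(Q^k)^\infty)$ is a cone; being nontrivial, it contains a vector $x^k$ with $\|x^k\|=1$, so that
$$\big\langle(Q^k)^\infty(x^k),\,y-x^k\big\rangle\geq 0,\quad \forall\, y\in K^\infty.$$
Passing to a subsequence I may assume $x^k\to\bar x$, where $\|\bar x\|=1$ and $\bar x\in K^\infty$ by closedness of $K^\infty$. Arguing as in Remark~\ref{P_infty}, $Q^k\to\bar Q$ in $\Po_d$ gives $(Q^k)^\infty\to\bar Q^\infty$ in $\Po_d$; combined with $x^k\to\bar x$ and the continuity of the evaluation map $(Q,x)\mapsto Q(x)$ recalled in the preliminaries, this yields $(Q^k)^\infty(x^k)\to\bar Q^\infty(\bar x)$. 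Fixing $y\in K^\infty$ and letting $k\to\infty$ in the inequality above, I obtain $\langle\bar Q^\infty(\bar x),y-\bar x\rangle\geq 0$; as $y\in K^\infty$ is arbitrary, $\bar x\in\Sol(K^\infty,\bar Q^\infty)$. Since $\|\bar x\|=1$, this contradicts $\bar Q\in\Ri_0(K,d)$, i.e. $\Sol(K^\infty,\bar Q^\infty)=\{0\}$. Hence $\Ri_0(K,d)$ is open.

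I expect the only genuinely delicate steps to be the first reduction — verifying that a sufficiently small perturbation in $\Po_d$ of a map of degree exactly $d$ is again of degree exactly $d$ (so that the leading term $(Q^k)^\infty$ is its homogeneous component of degree $d$, computed as for $\bar Q^\infty$) — and the choice of the normalized solutions $x^k$, which relies on the cone structure of $\Sol(K^\infty,(Q^k)^\infty)$ guaranteed by the homogeneity of $(Q^k)^\infty$. Once these are in place, the passage to the limit is routine and mirrors the implication $\mathrm{(a)}\Rightarrow\mathrm{(c)}$ of Proposition~\ref{bounded1}.
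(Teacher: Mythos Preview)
Your argument is correct and follows essentially the same normalization route as the paper's own proof. The paper shows closedness of $\Po_d\setminus\Ri_0(K,d)$ by picking $x^k\in\Sol(K^\infty,(Q^k)^\infty)$ with $\|x^k\|\to\infty$, passing to the complementarity formulation \eqref{CP_sys1}, and dividing by $\|x^k\|^{d_k+1}$ and $\|x^k\|^{d_k}$; you instead select $x^k$ on the unit sphere from the outset (using the cone structure of the solution set) and pass to the limit directly in the variational inequality, which is a slight streamlining. Your explicit reduction to $\deg Q^k=d$ is a point the paper leaves implicit, and it is indeed needed so that $(Q^k)^\infty$ is the degree-$d$ homogeneous part converging to $\bar Q^\infty$.
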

\begin{proof} We need only to prove that $\Po_d\setminus\Ri_0(K,d)$ is closed. Let $\{Q^k\}$ be a sequence in $\Po_d\setminus\Ri_0(K,d)$ such that $Q^k\to Q$. Then $(Q^k)^{\infty}\to  Q^{\infty}$ by Remark \ref{P_infty}. Moreover, for each $k$, $\Sol(K^{\infty},(Q^k)^{\infty})$ is unbounded. Hence, there exists an unbounded sequence  $\{x^k\}$ such that, for each $k$, $x^k\in\Sol(K^{\infty},(Q^k)^{\infty})$. Without loss of generality we can assume that  $x^k\neq 0$ for all $k$ and 
	$\|x^k\|^{-1}x^k\to\bar x$ with $\|\bar x\|=1.$
	By \eqref{CP_sys1}, for each $y\in K^{\infty}$, we get
	$$\langle
	(Q^k)^{\infty}(x^k),x^k\rangle=0, \ \langle
	(Q^k)^{\infty}(x^k),y\rangle\geq 0.$$
	Dividing the above equation and inequality by, respectively, $\|x^{k}\|^{d_k+1}$ and $\|x^{k}\|^{d_k}$, where $d_k$ is the degree of $Q^k$, and letting $k\to+\infty$, one gets
	$$\langle
	Q^{\infty}(\bar x),\bar x\rangle=0, \ \langle
	Q^{\infty}(\bar x),y\rangle\geq 0.$$
	This leads to $\bar x \in \Sol(K^{\infty}, Q^{\infty})$.
	As $\|\bar x\|=1$, we have $\bar x\neq 0$. It follows that $Q$ belongs to $\Po_d\setminus\Ri_0(K,d)$. The proof is complete. \qed
\end{proof}

Denote by $\B(0,\varepsilon)$  the open ball in $\Po_{d}$ with center at $0$ and radius $\varepsilon$. The closure of this ball is denoted by $\overline\B(0,\varepsilon)$.

The following theorem establishes a result on the local boundedness of the solution map $\Sol$ given in \eqref{Sol}.

\begin{theorem}\label{bounded2} If $(K,P)$ is an $R_0$-pair, then the map $\Sol$ is locally bounded at $P$, i.e. there exists $\varepsilon>0$ such that the set
	$$O_{\varepsilon}:=\bigcup_{Q\in \B(0,\varepsilon)} \Sol(K,P+Q)$$
	is bounded. Consequently, $\Sol(K,P+Q)$ is bounded for every $Q\in \B(0,\varepsilon)$.
\end{theorem}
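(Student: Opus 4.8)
The plan is to combine Lemma~\ref{open_cone} with a normalization (blow-up) argument of exactly the type used in Proposition~\ref{bounded1} and in the proof of Lemma~\ref{open_cone}. Since $(K,P)$ is an $R_0$-pair we have $P\in\Ri_0(K,d)$, and this set is open in $\Po_d$ by Lemma~\ref{open_cone}; so I would first pick $\varepsilon>0$ with $P+\overline\B(0,\varepsilon)\subset\Ri_0(K,d)$, i.e. $(K,P+Q)$ is an $R_0$-pair for every $Q\in\overline\B(0,\varepsilon)$ (shrinking $\varepsilon$ below $\|P^{\infty}\|$ if one wants $\deg(P+Q)=d$ throughout). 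I claim that this $\varepsilon$ does the job, namely that $O_\varepsilon$ is bounded; the ``consequently'' part is then immediate from $\Sol(K,P+Q)\subset O_\varepsilon$ for each $Q\in\B(0,\varepsilon)$.

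To prove the claim I would argue by contradiction. If $O_\varepsilon$ were unbounded there would be sequences $\{Q^k\}\subset\B(0,\varepsilon)$ and $\{x^k\}$ with $x^k\in\Sol(K,P+Q^k)$ and $\|x^k\|\to+\infty$; passing to subsequences, assume $Q^k\to\overline Q$ for some $\overline Q\in\overline\B(0,\varepsilon)$ and $\|x^k\|^{-1}x^k\to\bar x$ with $\|\bar x\|=1$. Because $x^k\in K$ and $\|x^k\|\to+\infty$, the definition of the asymptotic cone (with $t_k=\|x^k\|$) gives $\bar x\in K^{\infty}$; and by the choice of $\varepsilon$, $(K,P+\overline Q)$ is an $R_0$-pair, so $\Sol\big(K^{\infty},(P+\overline Q)^{\infty}\big)=\{0\}$. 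Then I would test the variational inequality for $x^k$ against the admissible points $u+\|x^k\|v\in K$ (where $u\in K$ is fixed and $v\in K^{\infty}$ is arbitrary), obtaining
$$\big\langle (P+Q^k)(x^k),\ u+\|x^k\|v-x^k\big\rangle\ \ge\ 0 .$$

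Dividing by $\|x^k\|^{d+1}$ and letting $k\to\infty$ is the heart of the matter. The second factor converges to $v-\bar x$. For the first factor I would split $P+Q^k=(P+Q^k)^{\infty}+L^k$ with $L^k\in\Po_{d-1}$, so that, by homogeneity of the leading term, $\|x^k\|^{-d}(P+Q^k)(x^k)=(P+Q^k)^{\infty}\!\big(\|x^k\|^{-1}x^k\big)+\|x^k\|^{-d}L^k(x^k)$. Remark~\ref{P_infty} gives $(P+Q^k)^{\infty}\to(P+\overline Q)^{\infty}$, hence the joint continuity of polynomial evaluation yields $(P+Q^k)^{\infty}(\|x^k\|^{-1}x^k)\to(P+\overline Q)^{\infty}(\bar x)$, while the $L^k$-term is $O(\|x^k\|^{-1})\to0$ since $\{L^k\}$ is bounded in $\Po_{d-1}$ and all its monomials have degree at most $d-1$. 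Passing to the limit gives $\langle (P+\overline Q)^{\infty}(\bar x),\ v-\bar x\rangle\ge0$ for all $v\in K^{\infty}$, so $\bar x\in\Sol\big(K^{\infty},(P+\overline Q)^{\infty}\big)=\{0\}$, contradicting $\|\bar x\|=1$.

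The point I expect to require the most care is precisely that here the perturbation $Q$ ranges over $\Po_d$ rather than over $\Po_{d-1}$, so Remark~\ref{P_Q} is not available to freeze the leading term; this is exactly what forces the use of Lemma~\ref{open_cone} (to retain the $R_0$-property of $(K,P+\overline Q)$ after taking the limit of the $Q^k$) together with Remark~\ref{P_infty} and the explicit control of the lower-order part $L^k$ in the displayed limit. Everything else is the same normalization scheme already used twice above.
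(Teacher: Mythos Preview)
Your proposal is correct and follows essentially the same route as the paper: choose $\varepsilon$ via Lemma~\ref{open_cone} so that $P+\overline\B(0,\varepsilon)\subset\Ri_0(K,d)$, assume $O_\varepsilon$ is unbounded, extract $Q^k\to\overline Q\in\overline\B(0,\varepsilon)$ and $\|x^k\|^{-1}x^k\to\bar x$, and apply the normalization argument of Proposition~\ref{bounded1} (testing against $u+\|x^k\|v$) together with Remark~\ref{P_infty} to reach the contradiction $\bar x\in\Sol(K^\infty,(P+\overline Q)^\infty)=\{0\}$. Your write-up is in fact slightly more explicit than the paper's about the limit computation (the split $(P+Q^k)=(P+Q^k)^\infty+L^k$), but the strategy is identical.
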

\begin{proof} According to Lemma \ref{open_cone}, the cone $\Ri_0(K,d)$ is open in $\Po_d$. Then there is some $\varepsilon$ small enough such that 
	\begin{equation}\label{PB}
	P+\overline\B(0,\varepsilon)\subset \Ri_0(K,d).
	\end{equation}
	Suppose on the contrary, $O_{\varepsilon}$ is unbounded. Then there exists an unbounded sequence $\{x^k\}$ and a sequence $\{Q^k\}\subset\B(0,\varepsilon)$ such that $x^k\in\Sol(K,P+Q^k)$, $x^k\neq 0$ for every $k$, and
	$\|x^k\|^{-1}x^k\to\bar x$ with $\|\bar x\|=1$.
	
	By the compactness of $\overline\B(0,\varepsilon)$,  without loss of generality, we can assume that $Q^k\to Q$. Clearly, $P+Q^k\to P+ Q$ and
	\begin{equation}\label{PQ}
	P+ Q \in P+\overline\B(0,\varepsilon).
	\end{equation}
	By repeating the argument of the proof of Proposition \ref{bounded1}, we can show that $\bar x\in\Sol\left(K^{\infty}, (P+Q)^{\infty}\right)$. From  \eqref{PB} and \eqref{PQ}, $(K, P+Q)$ is an $R_0$-pair. This gives $\bar x=0$ which  contradicts $\|\bar x\|=1$. Therefore, $O_{\varepsilon}$ is bounded. 
	
	The proof is complete. \qed
\end{proof}

\subsection{Upper semicontinuity}

A set-valued map $\Psi:X\rightrightarrows Y$ between two topological spaces $X,Y$ is \textit{upper semicontinuous} at $x\in X$ iff for any open set $V\subset Y$ such that $\Psi(x)\subset V$ there is a neighborhood $U$ of $x$ such that $\Psi(x')\subset V$ for all $x'\in U$. If $\Psi$ upper semicontinuous at every $x\in X$ then one says that $\Psi$ is  upper semicontinuous on $X$. Recall that if $\Psi$ is closed, i.e. its graph is a closed set in $X\times Y$, and locally bounded at $x$ then $\Psi$ is upper semicontinuous at $x$ (see, e.g., \cite[p.139]{FaPa03}).

\begin{proposition}\label{usc_1} If $(K,P)$ is an $R_0$-pair and $\Sol(K,P)\neq\emptyset$, then the map $\Sol$ is upper semicontinuous at $P$.
\end{proposition}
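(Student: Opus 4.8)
The plan is to obtain the upper semicontinuity of $\Sol$ at $P$ by combining two facts already established: the closedness of the solution map (Remark~\ref{cl_graph}) and its local boundedness at $P$ (Theorem~\ref{bounded2}, which applies precisely because $(K,P)$ is an $R_0$-pair). The standard criterion recalled just before the statement — a closed set-valued map that is locally bounded at a point is upper semicontinuous there, see \cite[p.~139]{FaPa03} — then reduces the proposition to verifying these two hypotheses, so no new machinery is needed.

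To make the argument self-contained I would run it by contradiction in sequential form. Suppose $\Sol$ is not upper semicontinuous at $P$. Then there exist an open set $V\subset\R^n$ with $\Sol(K,P)\subset V$ and a sequence $\{Q^k\}\subset\Po_d$ with $Q^k\to P$ together with points $x^k\in\Sol(K,Q^k)\setminus V$. Writing $Q^k=P+p^k$ with $p^k\to 0$ in $\Po_d$, for all large $k$ one has $p^k\in\B(0,\varepsilon)$, where $\varepsilon$ is the radius furnished by Theorem~\ref{bounded2}; hence $x^k\in O_{\varepsilon}$, so $\{x^k\}$ is bounded. Passing to a subsequence, $x^k\to\bar x$, and since $Q^k(x^k)\to P(\bar x)$ and $K$ is closed, the closedness of $\gph(\Sol)$ (Remark~\ref{cl_graph}) yields $\bar x\in\Sol(K,P)\subset V$. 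As $V$ is open, $x^k\in V$ for all large $k$, contradicting $x^k\notin V$. Therefore $\Sol$ is upper semicontinuous at $P$.

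Two comments on the structure. First, the hypothesis $\Sol(K,P)\neq\emptyset$ is not actually consumed by the contradiction above, but it is what makes the conclusion non-degenerate: by Theorem~\ref{bounded2} the set $\Sol(K,P)$ is bounded, and being closed it is then nonempty and compact, so $\Sol$ has a nonempty compact value at $P$ at which it is upper semicontinuous. Second, there is essentially no delicate step here once Theorem~\ref{bounded2} is in place; the only point to keep track of is that one must use local boundedness in the \emph{uniform} form given by that theorem (a single bound valid over the whole ball $\B(0,\varepsilon)$ around $P$), which is exactly what the sequential argument exploits when it places $p^k$ in $\B(0,\varepsilon)$. All the genuine difficulty has been front-loaded into Lemma~\ref{open_cone} and Theorem~\ref{bounded2}, so the present proof is short.
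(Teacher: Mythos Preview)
Your proof is correct and follows exactly the same approach as the paper: invoke the closedness of $\Sol$ (Remark~\ref{cl_graph}), the local boundedness at $P$ from Theorem~\ref{bounded2}, and the standard criterion from \cite[p.~139]{FaPa03}. The paper's proof consists of precisely these three lines; your sequential argument merely unpacks the criterion explicitly, and your remark that the nonemptiness hypothesis is not actually consumed by the argument is accurate.
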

\begin{proof} Assume that $(K,P)$ is an $R_0$-pair and $\Sol(K,P)\neq\emptyset$. From Remark \ref{cl_graph} and Theorem \ref{bounded2}, the map $\Sol$ is closed and locally bounded at $P$. Hence, $\Sol$ is upper semicontinuous at $P$. \qed
\end{proof}
\begin{corollary}
	If $K$ is compact, then the solution map $\Sol$	is upper semicontinuous on $\Po_{d}$.
\end{corollary}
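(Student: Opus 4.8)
The plan is to deduce the corollary directly from Proposition~\ref{usc_1} by checking that both of its hypotheses hold automatically when $K$ is compact, and that this happens uniformly at every point of $\Po_d$, not just at the fixed map $P$. First I would note that, by Remark~\ref{K_compact}, compactness of $K$ forces $K^\infty=\{0\}$, so $\Sc=\Sol(K^\infty,P^\infty)=\Sol(\{0\},(P^\infty))=\{0\}$ and hence $(K,P)$ is an $R_0$-pair; crucially, the same argument applies verbatim with $P$ replaced by any $Q\in\Po_d$, so $(K,Q)$ is an $R_0$-pair for \emph{every} $Q\in\Po_d$. Second, I would invoke the Hartman-Stampacchia Theorem (Theorem~\ref{thm:HS}): since $K$ is compact and every $Q\in\Po_d$ is continuous, $\Sol(K,Q)\neq\emptyset$ for every $Q$. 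Thus both hypotheses of Proposition~\ref{usc_1} are satisfied at every point $Q$ of $\Po_d$, and applying that proposition at each such $Q$ yields upper semicontinuity of $\Sol$ at every point, i.e.\ on all of $\Po_d$.

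Concretely, the proof body reads as follows. Let $Q\in\Po_d$ be arbitrary. Since $K$ is compact, Remark~\ref{K_compact} gives that $(K,Q)$ is an $R_0$-pair, and Theorem~\ref{thm:HS} gives $\Sol(K,Q)\neq\emptyset$. By Proposition~\ref{usc_1} (applied with $Q$ in place of $P$), the map $\Sol$ is upper semicontinuous at $Q$. As $Q$ was an arbitrary point of $\Po_d$, the map $\Sol$ is upper semicontinuous on $\Po_d$. \qed

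I do not anticipate any real obstacle here: the statement is a clean corollary, and every ingredient — $K^\infty=\{0\}$, the resulting $R_0$-property, nonemptiness via Hartman-Stampacchia, and closedness of $\gph(\Sol)$ via Remark~\ref{cl_graph} — is already recorded in the excerpt. The only point that deserves a moment's care is the logical quantifier structure: Proposition~\ref{usc_1} is stated at a single point $P$, so one must explicitly re-run it at each $Q\in\Po_d$ rather than silently assuming a global version; but since its hypotheses are point-independent in the compact case, this is immediate. If one wanted a self-contained argument avoiding Proposition~\ref{usc_1}, one could instead combine Remark~\ref{cl_graph} (the graph of $\Sol$ is closed) with the observation that, for $K$ compact, the entire family $\{\Sol(K,Q):Q\in\B(0,\varepsilon)\text{-ball around any }Q_0\}$ is contained in $K$ and hence locally bounded trivially, and then use the standard fact that a closed, locally bounded set-valued map is upper semicontinuous; but routing through Proposition~\ref{usc_1} is shorter.
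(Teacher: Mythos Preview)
Your proof is correct and follows essentially the same route as the paper's own proof: fix an arbitrary $Q\in\Po_d$, invoke Remark~\ref{K_compact} for the $R_0$-property, Theorem~\ref{thm:HS} for nonemptiness, and conclude via Proposition~\ref{usc_1}. Your added remark about the quantifier structure (applying Proposition~\ref{usc_1} pointwise at each $Q$) is exactly what the paper does implicitly.
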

\begin{proof} Suppose that $K$ is compact. Let $Q\in\Po_{d}$. From Remark \ref{K_compact}, $\VI(K,Q)$ has the $R_0$-property. Besides, Theorem \ref{thm:HS} says that $\Sol(K,Q)$ is nonempty. According to Proposition \ref{usc_1}, $\Sol$	is upper semicontinuous at $Q$. \qed
\end{proof}

\begin{corollary}\label{usc_3} Assume that $(K,P)$ is an $R_0$-pair and $p\in\R^n$. If the set $\Sol(K,P+p)$ is nonempty, then the solution map $\Sol_P$ is upper semicontinuous at $p$.
\end{corollary}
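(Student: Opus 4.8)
The plan is to reduce the statement to Proposition~\ref{usc_1} by treating the perturbation $p$ as a (constant) polynomial map. Write $\tilde P := P + p$, which is again a polynomial map of degree $d$, since the constant vector $p$ is a polynomial map of degree $0 \le d-1$ (recall that $d$ is a positive integer). Viewing $p$ as an element of $\Po_{d-1}$, Remark~\ref{P_Q} gives $\tilde P^{\infty} = P^{\infty}$, whence $\Sol(K^{\infty},\tilde P^{\infty}) = \Sc$; in particular $(K,\tilde P)$ is again an $R_0$-pair. By hypothesis $\Sol(K,\tilde P) = \Sol(K,P+p)$ is nonempty, so Proposition~\ref{usc_1}, applied with $\tilde P$ in place of $P$, yields that the solution map $\Sol$ of \eqref{Sol} is upper semicontinuous at the point $\tilde P = P + p$.

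Next I would observe that $\Sol_P$ factors through $\Sol$. The affine map $\iota : \R^n \to \Po_d$, $p' \mapsto P + p'$, is (Lipschitz) continuous: one has $\|\iota(p') - \iota(p'')\| = \|p' - p''\|$, because the $\Po_d$-norm of a constant polynomial map is just the Euclidean norm of the corresponding coefficient vector; and by definition $\Sol_P = \Sol \circ \iota$. Upper semicontinuity is preserved under precomposition with a continuous single-valued map, so the conclusion follows: given an open set $V \subset \R^n$ with $\Sol_P(p) = \Sol(K,\tilde P) \subset V$, upper semicontinuity of $\Sol$ at $\tilde P$ produces an open neighborhood $W$ of $\tilde P$ in $\Po_d$ with $\Sol(K,Q) \subset V$ for all $Q \in W$; then $U := \iota^{-1}(W)$ is an open neighborhood of $p$ with $\Sol_P(p') = \Sol(K,P+p') \subset V$ for all $p' \in U$.

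Alternatively, one can argue directly along the same lines used for Proposition~\ref{usc_1}: the map $\Sol_P$ is closed by Remark~\ref{cl_graph}, and it is locally bounded at $p$ because Theorem~\ref{bounded2}, applied to $\tilde P$, furnishes $\varepsilon > 0$ with $\bigcup_{Q \in \B(0,\varepsilon)} \Sol(K,\tilde P + Q)$ bounded, and the constant perturbations $p' - p$ with $\|p' - p\| < \varepsilon$ form a subset of $\B(0,\varepsilon) \subset \Po_d$; a closed, locally bounded set-valued map with nonempty value at a point is upper semicontinuous there.

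I do not expect a genuine obstacle; the statement is essentially a corollary. If anything needs care it is the two bookkeeping points underlying the reduction: (i) that adding the constant $p$ leaves the leading term unchanged, so that the $R_0$-property is inherited by $(K,P+p)$ — this is exactly Remark~\ref{P_Q}; and (ii) that the embedding $\R^n \hookrightarrow \Po_d$ given by $p' \mapsto P + p'$ is continuous for the $\Po_d$-norm, so that upper semicontinuity transfers from $\Sol$ at $\tilde P$ to $\Sol_P$ at $p$. Both are immediate from the preliminaries.
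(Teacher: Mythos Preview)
Your proposal is correct and follows essentially the same route as the paper: verify that $(K,P+p)$ is again an $R_0$-pair (via Remark~\ref{P_Q}), invoke Proposition~\ref{usc_1} to obtain upper semicontinuity of $\Sol$ at $P+p$, and then restrict to constant perturbations to transfer this to $\Sol_P$ at $p$. The paper carries out the last step explicitly with an $\varepsilon$-ball in $\Po_d$, which is exactly your factorization $\Sol_P = \Sol\circ\iota$ written out.
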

\begin{proof} Let $p\in\R^n$ be given. Clearly, $(K,P+p)$  is an $R_0$-pair. 	According to Proposition \ref{usc_1}, $\Sol$ is upper semicontinuous at $P+p$. Then for any open set $V$ containing $\Sol(K,P+p)$, there exists an open ball $\B(0,\varepsilon)$ such that 
	$$V \supset\bigcup_{Q\in \B(0,\varepsilon)} \Sol(K,(P+p)+Q)\supset\bigcup_{\|q\|<\varepsilon} \Sol(K,(P+p)+q).$$ 
	So, $U:=\{p+q\in\R^n:\|q\|<\varepsilon\}$ is an open neighbourhood of $p$ in $\R^n$ and $\Sol_{P}(U)\subset V$. It follows that $\Sol_{P}$ is upper semicontinuous at $p$. \qed
\end{proof}

The following theorem gives a sufficient condition for the $R_0$-property. The proof is a modification of one in \cite[Theorem 18.1]{LTY2005}.

\begin{theorem}
	Assume that $K$ is a cone. If there exists $Q\in\Po_{d-1}$ such that the following two conditions are satisfied:
	\begin{description}
		\item[\rm(a)] $\Sol(K,P+Q)$ is nonempty and bounded;
		\item[\rm(b)]  The solution map $\Sol$ is upper semicontinuous at $P+Q$;
	\end{description}	
	then $(K,P)$ is an $R_0$-pair.
\end{theorem}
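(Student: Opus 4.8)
The plan is to argue by contradiction. Since $K$ is a cone, $K^{\infty}=K$, and since $Q\in\Po_{d-1}$ we have $(P+Q)^{\infty}=P^{\infty}$ by Remark~\ref{P_Q}; hence $\Sc=\Sol(K^{\infty},P^{\infty})=\Sol(K,P^{\infty})$, and this set is a closed cone (as $K$ is a cone and $P^{\infty}$ is homogeneous of degree $d>0$). Suppose $(K,P)$ is not an $R_0$-pair. Then there is $\bar x\in\Sc$ with $\bar x\neq 0$; rescaling along the ray (using that $\Sc$ is a cone) we may assume $\|\bar x\|=1$, and then $t\bar x\in\Sc$ for every $t>0$. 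The idea is to realize each far-away vector $t\bar x$ as a solution of a problem $\VI(K,P+Q+Q'_t)$ for a perturbation $Q'_t\in\Po_d$ with $\|Q'_t\|\to 0$, and then reach a contradiction with $(a)$ and $(b)$ by letting $t\to+\infty$.

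For the construction, write $R:=P-P^{\infty}+Q\in\Po_{d-1}$, so that $P+Q=P^{\infty}+R$. For $t>0$ set $c_t:=-t^{-d}R(t\bar x)\in\R^n$ and define $Q'_t\in\Po_d$ by $Q'_t(x):=\langle x,\bar x\rangle^{d}\,c_t$ (each component $c_{t,i}\langle x,\bar x\rangle^{d}$ being a scalar polynomial of degree $d$). Since $\|\bar x\|=1$, one has $Q'_t(t\bar x)=t^{d}c_t=-R(t\bar x)$, whence $(P+Q+Q'_t)(t\bar x)=(P^{\infty}+R)(t\bar x)-R(t\bar x)=P^{\infty}(t\bar x)$. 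As $t\bar x\in K$ and $t\bar x\in\Sol(K,P^{\infty})$, it follows that $\langle(P+Q+Q'_t)(t\bar x),y-t\bar x\rangle=\langle P^{\infty}(t\bar x),y-t\bar x\rangle\geq 0$ for all $y\in K$; that is, $t\bar x\in\Sol(K,P+Q+Q'_t)$.

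Next I would estimate $\|Q'_t\|$. Because $\deg R\leq d-1$, each component of $t\mapsto R(t\bar x)$ is a polynomial in $t$ of degree at most $d-1$, so $\|R(t\bar x)\|=O(t^{d-1})$ and therefore $\|c_t\|=O(t^{-1})\to 0$. Since $\|Q'_t\|$ is a fixed multiple of $\|c_t\|$ (namely $\|c_t\|$ times the coefficient norm of the polynomial $\langle\cdot,\bar x\rangle^{d}$), we get $\|Q'_t\|\to 0$ as $t\to+\infty$. Now invoke $(a)$: choose $M>0$ with $\emptyset\neq\Sol(K,P+Q)\subset\{x\in\R^n:\|x\|<M\}$. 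This open set contains $\Sol(K,P+Q)$, so by the upper semicontinuity in $(b)$ there is $\varepsilon>0$ such that $\Sol(K,P+Q+Q')\subset\{x\in\R^n:\|x\|<M\}$ for every $Q'\in\B(0,\varepsilon)$. Picking $t$ large enough that $\|Q'_t\|<\varepsilon$ and $t>M$, the point $t\bar x$ lies in $\Sol(K,P+Q+Q'_t)$ yet $\|t\bar x\|=t>M$, a contradiction. Hence $\Sc=\{0\}$, i.e. $(K,P)$ is an $R_0$-pair.

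I expect the only genuine obstacle to be the perturbation in the second paragraph: one must cancel the defect $R(t\bar x)$, which grows like $t^{d-1}$, with a member of $\Po_d$ whose own norm still tends to $0$. Concentrating the correction on the ray $\R_+\bar x$ via the degree-$d$ form $\langle x,\bar x\rangle^{d}$ is precisely what makes the factor $t^{-d}$ in $c_t$ outweigh that growth; any scalar degree-$d$ polynomial $p$ with $p(\bar x)\neq 0$ would serve equally well. The remaining steps are the routine normalization argument already used for Proposition~\ref{bounded1} and Lemma~\ref{open_cone}.
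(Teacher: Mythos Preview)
Your proof is correct and follows the same contradiction strategy as the paper: assume a nonzero $\bar x\in\Sc$, build perturbations tending to $0$ in $\Po_d$ for which an unbounded ray of solutions appears, and contradict (a) together with (b). The paper's construction differs in its details: it works through the complementarity system \eqref{CP_sys1}, picks a nonzero coordinate $z_l$ of the nontrivial point $z\in\Sc$, and assembles $\overline Q=\sum_{k=1}^{d}\overline Q^{k}$ where each $\overline Q^{k}$ has components proportional to $x_l^{k}$ and is designed to cancel the $(k-1)$-th homogeneous component of $P+Q$ at $z$; then $Q_t:=(P+Q)+t\overline Q$ (with $t\to 0$) has $z_t=t^{-1}z$ as a solution. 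Your choice of the single degree-$d$ form $\langle x,\bar x\rangle^{d}$ (which evaluates to $t^{d}$ at $t\bar x$) cancels the whole defect $R(t\bar x)$ at once and bypasses both the CP formulation and the degree-by-degree bookkeeping. Both constructions exploit the same leverage you identify at the end: a degree-$d$ scalar factor evaluated on the ray grows like $t^{d}$, overwhelming the $O(t^{d-1})$ growth of the defect so that the perturbation's coefficient norm tends to $0$.
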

\begin{proof} Since $K$ is a cone, we have $K=K^{\infty}$. Suppose that there is $Q\in\Po_{d-1}$ such that $\rm(a)$ and $\rm(b)$ hold, but $(K,P)$ is not an $R_0$-pair. Let $0\neq z\in\Sol(K,P^{\infty})$. From \eqref{CP_sys1}, there exists $\lambda\in\R^n$ such that 
	\begin{equation*}\label{KKT_H0}
	P^{\infty}(z)-\lambda=0,\
	\langle \lambda,z \rangle=0,\ 	\langle \lambda,y\rangle\geq 0 \ \;  \forall y\in K.
	\end{equation*}
	For each $t\in(0,1)$, we take $z_t:=t^{-1}z$ and $ \lambda_t:=t^{-d}\lambda$. We prove the existence of $Q_t\in\Po_d$, with $Q_t\to P+Q$ as $t\to 0$, satisfying
	\begin{equation}\label{KKT_Ht}
	Q_t (z_t)-\lambda_t=0,\
	\langle \lambda_t,z_t \rangle=0,\ \langle\lambda_t, y \rangle \geq 0 \  \forall y\in K.
	\end{equation}
	
	Suppose that $$P=P^{\infty}+P^{d-1}+\dots+P^{1}+P^{0}$$ and $$Q= Q^{d-1}+\dots+Q^{1}+ Q^{0},$$ where $P^k, Q^k$ are homogeneous polynomial maps of degree $k$ $(k=1,\dots,d-1)$ and $P^{0}\in\R^n,Q^{0}\in\R^n$. The sum $P+Q$ can be written as
	\begin{equation}\label{PQ_bar}
	P+Q=P^{\infty}+[P^{d-1}+Q^{d-1}]+\cdots+[P^{1}+ Q^{1}]+[P^{0}+Q^{0}].
	\end{equation}
Because $z=(z_1,\dots,z_n)$ is nonzero, there exists $l\in\{1,\dots,n\}$ such that $z_l\neq 0$; hence $z^{k}_l\neq 0$ for $k=1,\dots,d$. Take $\overline Q\in\Po_d$ with $$\overline Q(x)=\overline Q^d(x)+\cdots+\overline Q^1(x),$$
	where $\overline Q^k$ is a  homogeneous polynomial map of degree $k$ defined by
	$$\overline Q^k(x)=\left( a_{k1}x_l^{k},\dots,a_{kn}x_l^{k}\right), \ a_{ki}=-\frac{P_i^{k-1}(z)+Q_i^{k-1}(z)}{z_l^{k}}, \ i=1,\dots,n,$$
	with $P_i^{k-1}, Q_i^{k-1}$  are the $i-$th components of $P^{k-1}, Q^{k-1}$, respectively.
	It is easy to check that \begin{equation}\label{FQ0}
	P^{k-1}(z)+Q^{k-1}(z)+\overline Q^{k}(z)=0.
	\end{equation}
Choosing
	$Q_t=(P+Q)+t\overline Q$, we now prove that the system \eqref{KKT_Ht} is valid. Indeed, the last one in \eqref{KKT_Ht}   is obvious. 	The second one  in~\eqref{KKT_Ht} is obtained by
	$$\langle \lambda_t,z_t \rangle=\langle t^{-d}\lambda,t^{-1}z \rangle=t^{-d-1}\langle \lambda,z \rangle=0.$$
We now prove the first equality of \eqref{KKT_Ht}. From \eqref{PQ_bar}, we get
	$$ \begin{array}{cl}
	Q_t(z_t)-\lambda_t&=[(P+Q)+t\overline Q](t^{-1}z) - t^{-d}\lambda \medskip \\ 
	&= t^{-d}\left[P^{\infty} (z)-\lambda\right] +\sum_{k=1}^{d}t^{-(k-1)}\left[ P^{k-1} (z)+Q^{k-1} (z)+\overline Q^{k}(z)\right].
	\end{array} $$
This and \eqref{FQ0} imply that $Q_t(z_t)-\lambda_t=0$.	Hence, we get $z_t \in \Sol(K,Q_t)$. This holds for all $t\in (0,1)$.
	
	Since $\Sol(K,P+Q)$ is bounded, there is a bounded open set $V$ containing $\Sol(K,P+Q)$. By the upper semicontinuity of $\Sol$ at $P+Q$, there is $\varepsilon>0$ such that $\Sol(K,P+Q+Q')\subset V$ for all $Q'\in\Po_n$ and $\|Q'-(P+Q)\|<\varepsilon$. Taking $t$ small enough such that $\|Q_t-(P+Q)\|<\varepsilon$, we have $\Sol(K,Q_t)\subset V$. So, $z_t\in V$ for every $t>0$ sufficiently small. This is impossible, because $V$ is bounded and $z_t$ is unbounded as $t\to 0$. 
	
	The proof is complete. \qed
\end{proof}

\section{Solution existence and stability under copositivity condition}\label{sec:exist}

In this section, we will establish some results on solution existence and stability of semi-polynomial variational inequalities whose involved maps are copositive.

\subsection{Solution existence}

Let $C$ be a nonempty and closed subset of $\R^n$.  Note that
$q\in\inte C^*$ if and only if 
$\left\langle v,q \right\rangle >0$ for all $v\in C$ and $v\neq 0$ (see, e.g., \cite[Lemma 6.4]{LTY2005}).

Recall that the map $F$ is copositive on $K$
if $\left\langle F(x),x \right\rangle \geq 0$ for all $x\in K$, and  
monotone on $K$ if 
\begin{equation}\label{mono}
\left\langle F(y)-F(x),y-x\right\rangle \geq 0,\end{equation}
for all $x,y\in K$. 
If the inequality in \eqref{mono} is strict for all $y\neq x$, then $F$ is strictly monotone on $K$. If $0\in K$, $F(0)=0$, and $F$ is monotone on $K$, then $F$  is copositive on $K$.


Theorem 6.2 in \cite{GoSo18} gives a result on the solution existence under a copositivity condition along with $\inte(K^*)\neq\emptyset$. We will now improve this result by weakening the interiority condition.

\begin{theorem}\label{cop1} Assume that $0\in K$ and $P$ is copositive on $K$. If $p\in\inte(\Sc^*)$, then $\Sol(K,P+p)$ is nonempty and bounded.
\end{theorem}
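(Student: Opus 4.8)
The plan is to establish the two conclusions --- boundedness and nonemptiness of $\Sol(K,P+p)$ --- by separate blow-up (normalization) arguments, the normalization being legitimate because polynomial maps are weakly homogeneous. The one preliminary fact I would record first is that $P^\infty$ is copositive on $K^\infty$: since $0\in K=K+K^\infty$ and $K^\infty$ is a cone, one has $K^\infty\subseteq K$ and $tv\in K$ for every $v\in K^\infty$ and $t\ge0$; writing $\langle P(tv),tv\rangle$ as a polynomial in $t$, its leading term is $t^{d+1}\langle P^\infty(v),v\rangle$, so dividing the inequality $\langle P(tv),tv\rangle\ge0$ by $t^{d+1}$ and letting $t\to+\infty$ yields $\langle P^\infty(v),v\rangle\ge0$ for all $v\in K^\infty$.

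For \emph{boundedness}, I would suppose $\{x^k\}\subset\Sol(K,P+p)$ is unbounded and pass to a subsequence with $x^k\ne0$, $\|x^k\|\to\infty$, $x^k/\|x^k\|\to\bar x$, $\|\bar x\|=1$; then $\bar x\in K^\infty$. Testing $\VI(K,P+p)$ at $x^k$ with $y=0\in K$ and using copositivity of $P$ gives $\langle p,x^k\rangle\le-\langle P(x^k),x^k\rangle\le0$, hence $\langle p,\bar x\rangle\le0$. Testing with $y=\|x^k\|v$ for an arbitrary $v\in K^\infty$ (legitimate since $K^\infty\subseteq K$ and $K^\infty$ is a cone), dividing by $\|x^k\|^{d+1}$ and letting $k\to\infty$ --- the constant term $p$ drops out because $d\ge1$ --- yields $\langle P^\infty(\bar x),v-\bar x\rangle\ge0$ for all $v\in K^\infty$, so $\bar x\in\Sol(K^\infty,P^\infty)=\Sc$ with $\bar x\ne0$. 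Then the characterization $q\in\inte C^*\iff\langle v,q\rangle>0$ for all $v\in C\setminus\{0\}$, applied to $C=\Sc$, gives $\langle p,\bar x\rangle>0$, contradicting $\langle p,\bar x\rangle\le0$.

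For \emph{nonemptiness} I would use truncation: for $r>0$ set $K_r:=\{x\in K:\|x\|\le r\}$, which is nonempty (it contains $0$) and compact, so $\Sol(K_r,P+p)\ne\emptyset$ by Theorem~\ref{thm:HS}; and if some $x\in\Sol(K_r,P+p)$ has $\|x\|<r$ then $x\in\Sol(K,P+p)$ (a short step from $x$ toward any $y\in K$ remains in $K_r$). It therefore suffices to find one $r$ admitting a solution of $\VI(K_r,P+p)$ of norm strictly less than $r$. If there were none, then for each $k$ we could pick $x_k\in\Sol(K_k,P+p)$ with $\|x_k\|=k$; passing to a subsequence, $x_k/\|x_k\|\to\bar x$, $\|\bar x\|=1$, $\bar x\in K^\infty$. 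Testing with $y=0\in K_k$ gives $\langle p,\bar x\rangle\le0$ and (dividing $\langle(P+p)(x_k),-x_k\rangle\ge0$ by $\|x_k\|^{d+1}$) $\langle P^\infty(\bar x),\bar x\rangle\le0$, which with the preliminary forces $\langle P^\infty(\bar x),\bar x\rangle=0$. For $v\in K^\infty\setminus\{0\}$ the point $y=(\|x_k\|/\|v\|)v$ lies in $K_k$; testing with it, dividing by $\|x_k\|^{d+1}$, and passing to the limit gives $\langle P^\infty(\bar x),v/\|v\|-\bar x\rangle\ge0$, i.e.\ $\langle P^\infty(\bar x),w\rangle\ge0$ for every unit $w\in K^\infty$; hence $\langle P^\infty(\bar x),v\rangle\ge0$ for all $v\in K^\infty$ by scaling, and since $\langle P^\infty(\bar x),\bar x\rangle=0$ this yields $\langle P^\infty(\bar x),v-\bar x\rangle\ge0$ for all $v\in K^\infty$, so $\bar x\in\Sc\setminus\{0\}$. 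As before, $p\in\inte(\Sc^*)$ then forces $\langle p,\bar x\rangle>0$, a contradiction; so some $\VI(K_r,P+p)$ has a solution of norm $<r$, and $\Sol(K,P+p)\ne\emptyset$.

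The step I expect to be the main obstacle is the nonemptiness argument: in the truncated problems the admissible test directions have length only of order $\|x_k\|$, so the degree-$d$ rescaling recovers merely the ``unit-sphere'' inequality $\langle P^\infty(\bar x),w-\bar x\rangle\ge0$ for unit $w\in K^\infty$, and to upgrade this to the full variational inequality over $K^\infty$ --- which is what places $\bar x$ in $\Sc$ and triggers the contradiction with $p\in\inte(\Sc^*)$ --- one must first pin down $\langle P^\infty(\bar x),\bar x\rangle=0$, i.e.\ genuinely use the copositivity of the leading term. The boundedness half is, by comparison, a routine blow-up, since there the test vector $y=\|x^k\|v$ is available without any norm restriction.
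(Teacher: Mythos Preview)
Your proof is correct and follows essentially the same normalization/truncation strategy as the paper: both use Hartman--Stampacchia on $K_r=\{x\in K:\|x\|\le r\}$, both test with $y=0$ to get $\langle p,\bar x\rangle\le0$ and with $y=(\|x_k\|/\|v\|)v$ to place the blow-up limit $\bar x$ in $\Sc$, and both derive the contradiction from $p\in\inte(\Sc^*)$. The only cosmetic differences are that the paper proves nonemptiness before boundedness and extracts a solution by showing the truncated solutions $\{x_k\}$ are bounded and passing to a limit, whereas you invoke the ``interior solution'' criterion (some $x\in\Sol(K_r,P+p)$ with $\|x\|<r$); and the paper uses copositivity of $P$ directly on $x^k\in K$ rather than first deducing copositivity of $P^\infty$ on $K^\infty$ --- both routes give $\langle P^\infty(\bar x),\bar x\rangle=0$ equally well.
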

\begin{proof} If $K$ is compact then the assertion is obvious. Hence, we suppose that $K$ be unbounded. Let $p\in\inte(\Sc^*)$ be given. For each $k=1,2,\dots$, we denote $$K_k=\{x\in\R^n:x\in K,\|x\|\leq k\}.$$
	Clearly, the set $K_k$ is compact. Without loss of generality, we can assume that $K_k$ is nonempty. According to Theorem \ref{thm:HS},  $\VI(K_k,P+p)$ has a solution denoted by $x_k$. 
	
	We will show that the sequence $\{x^k\}$ is bounded. Suppose on the contrary that $\{x^k\}$ is unbounded with  $x^k\neq 0$, for all $k$, and 
	$\|x^k\|^{-1}x^k\to\bar x$. Clearly, one has $\bar x \in K^{\infty}$ and $\|\bar x\|=1$.
	For each $k$, it is true that
	\begin{equation}\label{VI_H}
	\left\langle P(x^k)+p,y-x^k \right\rangle \geq 0,
	\end{equation}
	for all $y\in K_k$. By fixing $y\in K_1$, dividing the inequality in \eqref{VI_H} by $\|x^k\|^{d+1}$ and letting $k\to+\infty$, we obtain
	$\left\langle P^{\infty}(\bar x),\bar x \right\rangle\leq 0$. Moreover, by the copositity of $P$, one has $\left\langle P(x^k),x^k \right\rangle\geq 0$. This leads to $\left\langle P^{\infty}(\bar x),\bar x \right\rangle \geq 0$. We thus get $\left\langle P^{\infty}(\bar x),\bar x \right\rangle= 0$.
	
	Let $v\in K^{\infty}\setminus \{0\}$ be fixed. For each $k$, we set
	$y_k:=\|x^k\|\|v\|^{-1}v.$
	Since $K=K+K^{\infty}$ and $0\in K$, one has $y_k\in K$ for any $k$. It is easy to see that $\|y^k\|=\|x^k\|\leq k$, hence that $y_k\in K_k$. 
	Now \eqref{VI_H} becomes
	$$\left\langle P(x^k)+p,\|x^k\|\|v\|^{-1}v-x^k \right\rangle\geq 0.$$
	Dividing this inequality by $\|x^k\|^{d+1}$ and taking $k\to+\infty$, we obtain
	$$\left\langle P^{\infty}(\bar x),v \right\rangle\geq \|v\|\left\langle P^{\infty}(\bar x),\bar x \right\rangle.$$
	From what has already been proved, we have
	$$\bar x\in K^{\infty}, \ \left\langle P^{\infty}(\bar x),\bar x \right\rangle = 0, \ \left\langle P^{\infty}(\bar x),v \right\rangle\geq 0 \ \forall v\in K^{\infty}.$$
	This means that $\bar x \in \Sc$. 
	
	Since $P$ is copositive, letting $y=0$ in \eqref{VI_H}, one has
	$$-\left\langle p,x^k\right\rangle \geq \left\langle P(x^k),x^k \right\rangle\geq 0.$$
	Dividing this inequality by $\|x^k\|$, as $k\to+\infty$, we get $\left\langle p,\bar x\right\rangle \leq 0$. 
This contradicts the assumption $p\in\inte(\Sc^*)$. Thus, the sequence $\{x^k\}$ must be bounded. 

We can assume that  $x^k\to \hat x$. We now prove that $\hat x$ solves $\VI(K,P+p)$. Indeed, for any $y\in K$, from \eqref{VI_H}, taking $k\to+\infty$, one has
	$$\left\langle P(\hat{x})+p,y-\hat{x} \right\rangle \geq 0.$$
	Hence, the nonemptiness of $\Sol(K,P+p)$ is proved.
	
	The boundedness of $\Sol(K,P+p)$ is proved by assuming that there exists an unbounded sequence of solutions $\{x_k\}\subset\Sol(K,P+p)$ with  $x^k\neq 0$, for all $k$, and 
	$\|x^k\|^{-1}x^k\to\bar x$ with $\|\bar x\|=1$.
Applying the normalization argument, we can show that $\bar x \in \Sc$ and  $\left\langle p,\bar x\right\rangle \leq 0$. This contradicts the assumption $p\in\inte(\Sc^*)$. 
	
	The proof is complete.
	\qed
\end{proof}

To illustrate Theorem \ref{cop1}, we provide the following example.

\begin{example} Consider the polynomial variational inequalities $\VI(K,P+p)$, where $K=\R^2_+$, $p=(p_1,p_2)^T\in\R^2$, and $P$ is given by
	$$P(x_1,x_2)=\begin{bmatrix}
	(x_1-x_2)^2\\
	(x_1-x_2)^2
	\end{bmatrix}.$$
	Clearly, one has $P^{\infty}=P,$ $K^{\infty}=K$, and  $P$ is copositive on $K$. 	Since $$\Sc=\{(x_1,x_2)\in\R^2_+: x_1-x_2=0\},$$
	one has
	$$\inte(\Sc^*)=\{(p_1,p_2)\in\R^2: p_1+ p_2>0\}.$$
	From Theorem \ref{cop1}, $\Sol(K,P+p)$ is nonempty and bounded for any $p\in \inte(\Sc^*)$. In fact, an easy computation shows that 
	\begin{equation*}\label{SolO}
	\Sol_{P}(p_1,p_2)=\left\{\begin{array}{ccc}
	L_0 & \text{ if } &p_1=0, \;p_2=0, \\
	L_{-p_1}&  \text{ if } & p_1=p_2, p_2<0,  \\
	\{(0,0)\}\cup\{(0,\sqrt{-p_2})\} &  \text{ if } & p_1> p_2,p_2<0, \\
	\{(0,0)\}\cup\{(\sqrt{-p_1},0)\} &  \text{ if } & p_1<0,p_1< p_2,\\	
	\{(0,0)\} & \text{ if } & \text{ otherwise,}
	\end{array}\right.
	\end{equation*}
	where $$L_{-p_1}=\{(x_1,x_2)\in\R^2_+: x_1-x_2=\sqrt{-p_1}\}.$$	
	Clearly, $	\Sol_{P}(p_1,p_2)$ is nonempty and bounded for any $p_1+ p_2>0$.
\end{example}

\subsection{Upper semicontinuity}

We now give a sufficient condition for the upper semicontinuity of $\Sol_{P}$ at $p$ under the copositivity.

\begin{proposition}\label{usc_int} Assume that $0\in K$ and $P$ is copositive on $K$. Then, $\Sol_{P}$ is upper semicontinuous on $\inte(\Sc^*)$.
\end{proposition}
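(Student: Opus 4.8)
The plan is to verify, at an arbitrary point $p_0\in\inte(\Sc^*)$, the two ingredients that together force upper semicontinuity: closedness of $\Sol_P$ and local boundedness of $\Sol_P$ at $p_0$. Once these are in hand, the standard fact that a closed set-valued map which is locally bounded at a point is upper semicontinuous there (see \cite[p.~139]{FaPa03}) finishes the proof, the values being nonempty since $\Sol(K,P+p_0)$ is nonempty and bounded by Theorem \ref{cop1}. Closedness of $\Sol_P$ is already recorded in Remark \ref{cl_graph}, so the whole weight falls on local boundedness. Note that the $R_0$-machinery of Section~\ref{sec:usc} is of no help here: copositivity does not imply the $R_0$-property (the cone $\Sc$ may be nontrivial), so local boundedness must be extracted directly from copositivity, by mimicking the normalization argument in the proof of Theorem \ref{cop1} but now letting the parameter vary.

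Concretely, to prove local boundedness at $p_0$ I would argue by contradiction. If $\Sol_P$ is not locally bounded at $p_0$, there are $p_k\to p_0$ and $x^k\in\Sol(K,P+p_k)$ with $x^k\neq 0$, $\|x^k\|\to\infty$, and, after passing to a subsequence, $\|x^k\|^{-1}x^k\to\bar x$ with $\|\bar x\|=1$; since $x^k\in K$ one gets $\bar x\in K^{\infty}$. Starting from $\langle P(x^k)+p_k,y-x^k\rangle\ge 0$ for all $y\in K$, I carry out three limit passages. First, put $y=0$, divide by $\|x^k\|^{d+1}$, and let $k\to\infty$: using $\deg P=d$, the boundedness of $\{p_k\}$, and the copositivity bound $\langle P(x^k),x^k\rangle\ge 0$, this yields $\langle P^{\infty}(\bar x),\bar x\rangle=0$. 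Second, for a fixed $v\in K^{\infty}\setminus\{0\}$ put $y=y_k:=\|x^k\|\,\|v\|^{-1}v$, which lies in $K$ because $0\in K$ and $K=K+K^{\infty}$; dividing by $\|x^k\|^{d+1}$ and letting $k\to\infty$ gives $\langle P^{\infty}(\bar x),v\rangle\ge\|v\|\,\langle P^{\infty}(\bar x),\bar x\rangle=0$. Hence $\bar x\in K^{\infty}$, $\langle P^{\infty}(\bar x),\bar x\rangle=0$, and $\langle P^{\infty}(\bar x),v\rangle\ge 0$ for all $v\in K^{\infty}$, i.e. $\bar x\in\Sol(K^{\infty},P^{\infty})=\Sc$. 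Third, put $y=0$ again and use copositivity to get $-\langle p_k,x^k\rangle\ge\langle P(x^k),x^k\rangle\ge 0$; dividing by $\|x^k\|$ and letting $k\to\infty$ yields $\langle p_0,\bar x\rangle\le 0$. Since $\bar x\in\Sc\setminus\{0\}$ and $p_0\in\inte(\Sc^*)$, the characterization $q\in\inte\Sc^*$ iff $\langle v,q\rangle>0$ for all $v\in\Sc\setminus\{0\}$ (cf. \cite[Lemma 6.4]{LTY2005}) forces $\langle p_0,\bar x\rangle>0$, a contradiction. Thus $\Sol_P$ is locally bounded at $p_0$, hence upper semicontinuous at $p_0$, and as $p_0\in\inte(\Sc^*)$ was arbitrary, $\Sol_P$ is upper semicontinuous on $\inte(\Sc^*)$.

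The substantive content is the three limit passages, and the only spot needing care is the bookkeeping of the powers of $\|x^k\|$: one must check that the terms involving the bounded parameters $p_k$ and the fixed test vectors wash out after division by the appropriate power of $\|x^k\|$, that $P(x^k)/\|x^k\|^{d}\to P^{\infty}(\bar x)$, and that $\bar x$ really is a nonzero element of $\Sc$ so that the interior-of-dual-cone characterization applies at the end. Beyond faithfully transcribing the normalization argument already used for Theorem \ref{cop1}, I do not anticipate a genuine obstacle.
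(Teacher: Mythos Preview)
Your proposal is correct and follows essentially the same approach as the paper: both arguments hinge on the normalization computation from Theorem~\ref{cop1} (applied with the varying parameter $p_k\to p_0$) to show that any sequence $x^k\in\Sol(K,P+p_k)$ with $p_k\to p_0\in\inte(\Sc^*)$ must be bounded. The only cosmetic difference is that the paper negates upper semicontinuity directly and runs the boundedness-plus-closedness contradiction inline, whereas you package the same content as ``closed (Remark~\ref{cl_graph}) + locally bounded $\Rightarrow$ usc''; the substance is identical.
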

\begin{proof}	Let $p$ be in $\inte(\Sc^*)$. Theorem \ref{cop1} says that $\Sol(K,P+p)\neq \emptyset$. Suppose that $\Sol_P$ is not upper semicontinuous at $p$. Then there exist a nonempty open set $V$ containing $\Sol(K,P+p)$, a sequence $\{p^k\}\subset \R^n$, and a sequence $\{x^k\}\subset K$ such that $p^k\to p$ and
	\begin{equation}\label{V_open1}
	x^k\in\Sol(K,P+p^k)\setminus V,
	\end{equation}
	for each $k$.	By repeating the argument of the proof of Theorem \ref{cop1}, one can prove that the sequence $\{x^k\}$ is bounded.	So, without loss of generality we can assume that  $x^k\to \bar x$. It is easy to check that  $\bar x\in\Sol(K,P+p)$, hence that $\bar x\in V$. Besides, since $V$ is open, the relation \eqref{V_open1} implies that $\bar x\notin V$. One obtains a contradiction. Therefore, $\Sol_P$ is upper semicontinuous at $p$. \qed
\end{proof}

\begin{corollary}\label{cor:copo} 
	Assume that $0\in K$ and $P$ is copositive on $K$. If $(K,P)$ is an $R_0$-pair, then the two following assertions hold:
	\begin{description}
		\item[\rm(a)] $\Sol_{P}(p)$ is nonempty and bounded for any $p\in\R^n$.
		\item[\rm(b)] $\Sol_{P}$ is upper semicontinuous on $\R^n$.
	\end{description}	
	
\end{corollary}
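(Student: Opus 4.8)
The plan is to reduce both assertions to the two results just established, Theorem \ref{cop1} and Proposition \ref{usc_int}, by observing that the $R_0$-hypothesis makes the interiority condition on $p$ vacuous. Concretely, if $(K,P)$ is an $R_0$-pair then, by definition, $\Sc=\Sol(K^{\infty},P^{\infty})=\{0\}$. The dual cone of the trivial cone is the whole space, since every $q\in\R^n$ satisfies $\langle q,0\rangle\ge 0$; hence $\Sc^{*}=\R^n$ and consequently $\inte(\Sc^{*})=\R^n$.

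With this identification in hand, assertion (a) follows at once: given any $p\in\R^n$, we have $p\in\inte(\Sc^{*})$, so Theorem \ref{cop1} (whose remaining hypotheses $0\in K$ and copositivity of $P$ on $K$ are in force) tells us that $\Sol(K,P+p)$ is nonempty and bounded; and $\Sol_{P}(p)=\Sol(K,P+p)$ by the definition \eqref{Sol_P}. Assertion (b) follows in exactly the same way from Proposition \ref{usc_int}: under the hypotheses $0\in K$ and $P$ copositive on $K$, the map $\Sol_{P}$ is upper semicontinuous on $\inte(\Sc^{*})$, which is now all of $\R^n$.

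I do not expect a genuine obstacle here; the only step that really deserves to be written out is the elementary fact $\{0\}^{*}=\R^n$, which is precisely what turns ``$p\in\inte(\Sc^{*})$'' into ``$p$ arbitrary''. If one preferred a self-contained argument rather than citing the two earlier results, one could re-run their normalization arguments: for (a), assuming $\{x^k\}\subset\Sol(K,P+p)$ unbounded and passing to a limit direction $\|x^k\|^{-1}x^k\to\bar x$ with $\|\bar x\|=1$, copositivity of $P$ forces $\bar x\in\Sc=\{0\}$, a contradiction, while nonemptiness is obtained from the compact truncations $K_k=\{x\in K:\|x\|\le k\}$ via Theorem \ref{thm:HS}; for (b), any failure of upper semicontinuity would produce, after the same normalization, a nonzero element of $\Sc$. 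But invoking Theorem \ref{cop1} and Proposition \ref{usc_int} directly is shorter, and that is the route I would take.
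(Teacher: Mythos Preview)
Your proposal is correct and follows exactly the paper's own argument: observe that the $R_0$-property gives $\Sc=\{0\}$, whence $\inte(\Sc^{*})=\R^n$, and then invoke Theorem \ref{cop1} for (a) and Proposition \ref{usc_int} for (b). The additional remarks about re-running the normalization arguments are sound but unnecessary, and the paper does not include them.
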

\begin{proof} Since $(K,P)$ is an $R_0$-pair, one has $\Sc=\{0\}$ and $\inte(\Sc^*)=\R^n$. From  Theorem \ref{cop1}, the assertion $\rm(a)$ follows.  From  Proposition \ref{usc_int}, $\Sol_{P}$ is upper semicontinuous at $p$, for any $p\in \R^n$. \qed
\end{proof}

\subsection{Local upper-H\"{o}lder stability}

In \cite{LTY2005}, the authors gave a result for the solution stability of copositive affine variational inequalities. In this section, we extend these results for copositive polynomial variational inequalities.

Let $p\in\R^n$ be given. If there exist $L>0,c>0$ and a neighborhood $U_{p}$ of $p$ such that 
$$\Sol_{P}(q)\subset \Sol_{P}(p)+L\|q-p\|^{c}\Bo(0,1) \ \ \forall q\in U_{p},$$
where $\Bo(0,1)$ is the closed unit ball in $\R^n$, then one says that $\Sol_{P}$ is \textit{locally upper-H\"{o}lder stable} at $p$.

When $K$ is semi-algebraic, the following result says that  the upper semicontinuity and the local upper-H\"{o}lder stability of $\Sol_{P}$ at $p$ are equivalent.

\begin{theorem}\label{thm:LLP}{\rm(see \cite{LLP2018})} Assume that $K$ is semi-algebraic and $\Sol_{P}(p)$ is nonempty. Then the map $\Sol_{P}$ is  upper semicontinuous at $p$ iff it is locally upper-H\"{o}lder stable at $p$.
\end{theorem}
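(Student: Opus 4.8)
The plan is to reduce the equivalence to a one-variable Puiseux estimate for a semi-algebraic ``modulus of upper continuity''. First I would record that
$$\gph(\Sol_P)=\big\{(q,x)\in\R^n\times\R^n:\ x\in K,\ \langle P(x)+q,\,y-x\rangle\ge 0\ \ \forall\,y\in K\big\}$$
is a semi-algebraic subset of $\R^n\times\R^n$: since $K$ is semi-algebraic and $P$ is a polynomial map, the displayed condition is a first-order formula over the real field, so Tarski--Seidenberg (quantifier elimination) applies. Fixing the given $p$ with $\Sol_P(p)\ne\emptyset$, the map $x\mapsto\operatorname{dist}(x,\Sol_P(p))$ is semi-algebraic, hence so is the nondecreasing function
$$\theta(t):=\sup\big\{\operatorname{dist}(x,\Sol_P(p)):\ x\in\Sol_P(q),\ \|q-p\|\le t\big\},\qquad t\ge 0,$$
which satisfies $\theta(0)=0$ (here $\theta$ takes values in $[0,+\infty]$).

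The core of the argument is the reduction of both properties to the behaviour of $\theta$ at $0$. On one hand, local upper-H\"{o}lder stability of $\Sol_P$ at $p$ is, by definition, exactly the statement that $\theta(t)\le Lt^{c}$ for some $L,c>0$ on some interval $[0,\delta)$. On the other hand, I claim that $\Sol_P$ is upper semicontinuous at $p$ if and only if $\theta(t)$ is finite for small $t$ and $\lim_{t\to 0^+}\theta(t)=0$. The implication ``$\Leftarrow$'' is immediate when $\Sol_P(p)$ is bounded (then $\Sol_P(p)+\varepsilon\Bo(0,1)$ exhausts every open set containing $\Sol_P(p)$ as $\varepsilon\downarrow 0$), and ``$\Rightarrow$'' is clear from the definition of upper semicontinuity applied to the neighbourhoods $\{x:\operatorname{dist}(x,\Sol_P(p))<\varepsilon\}$.

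Granting this reduction, it remains to prove: a nonnegative, one-variable, semi-algebraic function $\theta$ defined near $0$ with $\theta(0^+)=0$ satisfies $\theta(t)\le Lt^{c}$ for suitable $L,c>0$ on a neighbourhood of $0$. This is standard: $\theta$ admits a Puiseux expansion at the origin, so on a smaller interval either $\theta\equiv 0$ or $\theta(t)=at^{c}+o(t^{c})$ with $a>0$ and $c\in\mathbb{Q}_{>0}$ --- the exponent being positive precisely because $\theta(0^+)=0$ --- and in either case the desired power bound holds (this is the {\L}ojasiewicz inequality for $\theta$). Combining with the reduction yields: $\Sol_P$ upper semicontinuous at $p$ $\iff$ $\theta(t)\to 0$ $\iff$ $\theta(t)\le Lt^{c}$ near $0$ $\iff$ $\Sol_P$ locally upper-H\"{o}lder stable at $p$.

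The step I expect to be the main obstacle is making the reduction of the previous paragraph rigorous when $\Sol_P(p)$ is unbounded --- in particular, showing that $\theta(t)$ is finite for small $t$ and that the ``uniform tube'' condition $\theta(t)\to 0$ genuinely captures upper semicontinuity at $p$. This is where one must exploit the finer semi-algebraic geometry of $\gph(\Sol_P)$, e.g. by stratifying the parameter space near $p$ and using the conic structure at infinity of the fibers of the graph; this is precisely the analysis carried out in \cite{LLP2018}.
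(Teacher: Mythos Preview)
The paper does not prove this theorem; it is simply quoted from \cite{LLP2018}. So there is no in-paper proof to compare against --- your sketch is in fact providing what the paper omits, and the route you take (pass to the one-variable semi-algebraic ``excess'' function $\theta$ and invoke a Puiseux/\L ojasiewicz bound) is exactly the standard argument and, as far as I know, the one used in \cite{LLP2018}.

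Two comments on the content of your sketch. First, the obstacle you flag at the end is misplaced for the nontrivial direction. The implication ``u.s.c.\ $\Rightarrow$ $\theta(t)\to 0$'' that you mark as clear really \emph{is} clear regardless of whether $\Sol_P(p)$ is bounded: the $\varepsilon$-tubes $\{x:\operatorname{dist}(x,\Sol_P(p))<\varepsilon\}$ are legitimate open neighbourhoods of $\Sol_P(p)$ in any case, and testing u.s.c.\ against them already yields $\theta(t)\le\varepsilon$ for small $t$. Combined with your Puiseux step, this gives u.s.c.\ $\Rightarrow$ H\"older in full generality, with no extra work needed at infinity.

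Second, the genuine unboundedness issue sits in the direction you treat as easy. From H\"older stability you get $\theta(t)\to 0$, but ``$\theta(t)\to 0\Rightarrow$ u.s.c.'' uses that every open $V\supset\Sol_P(p)$ contains some $\varepsilon$-tube, which is a compactness argument and can fail for unbounded closed sets (take $\Sol_P(p)$ the $x$-axis in $\R^2$ and $V=\{(x,y):|y|<e^{-x^2}\}$). So as written, your equivalence ``u.s.c.\ $\iff$ $\theta(t)\to 0$'' is only established when $\Sol_P(p)$ is compact, and hence so is the implication H\"older $\Rightarrow$ u.s.c. Closing this gap is precisely what has to be extracted from \cite{LLP2018}; the present paper does not supply it.
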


\begin{proposition}\label{Holder} Assume that $K$ is semi-algebraic, $0\in K$, and $P$ is copositive on $K$. If $p\in\inte(\Sc^*)$, then $\Sol_{P}$ is locally upper-H\"{o}lder stable at $p$.
\end{proposition}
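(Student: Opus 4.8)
The statement to prove is Proposition~\ref{Holder}: under the hypotheses $K$ semi-algebraic, $0 \in K$, $P$ copositive on $K$, and $p \in \inte(\Sc^*)$, the map $\Sol_P$ is locally upper-Hölder stable at $p$. The plan is to combine Theorem~\ref{thm:LLP} (the equivalence, for semi-algebraic $K$, of upper semicontinuity and local upper-Hölder stability at a point where $\Sol_P(p) \neq \emptyset$) with the copositive existence and upper semicontinuity machinery already established in this section.

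First I would verify the two hypotheses needed to invoke Theorem~\ref{thm:LLP}: that $K$ is semi-algebraic (assumed) and that $\Sol_P(p) = \Sol(K, P+p)$ is nonempty. The nonemptiness is exactly the conclusion of Theorem~\ref{cop1}, which applies since $0 \in K$, $P$ is copositive on $K$, and $p \in \inte(\Sc^*)$. Second, I would establish that $\Sol_P$ is upper semicontinuous at $p$; this is precisely Proposition~\ref{usc_int}, whose hypotheses ($0 \in K$, $P$ copositive on $K$, $p \in \inte(\Sc^*)$) coincide with ours. Having both ingredients, Theorem~\ref{thm:LLP} immediately yields that $\Sol_P$ is locally upper-Hölder stable at $p$, completing the proof.

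There is essentially no obstacle here: the proposition is a clean corollary assembled from Theorem~\ref{cop1}, Proposition~\ref{usc_int}, and Theorem~\ref{thm:LLP}. The only point requiring a moment's care is confirming that the hypothesis sets of the three cited results line up exactly with the hypotheses of Proposition~\ref{Holder} — in particular that $p \in \inte(\Sc^*)$ is the shared interiority condition throughout — but this is routine. One could also remark, as a corollary in the spirit of Corollary~\ref{cor:copo}, that when $(K,P)$ is additionally an $R_0$-pair one has $\inte(\Sc^*) = \R^n$, so $\Sol_P$ is then locally upper-Hölder stable at every $p \in \R^n$.
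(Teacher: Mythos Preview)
Your proposal is correct and follows essentially the same route as the paper: invoke Proposition~\ref{usc_int} for upper semicontinuity at $p$ and then apply Theorem~\ref{thm:LLP} to upgrade to local upper-H\"{o}lder stability. If anything you are slightly more careful than the paper, which omits the explicit check (via Theorem~\ref{cop1}) that $\Sol_P(p)\neq\emptyset$ before invoking Theorem~\ref{thm:LLP}.
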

\begin{proof} Suppose that $p\in\inte(\Sc^*)$. Proposition \ref{usc_int} says that $\Sol_{P}$ is upper semicontinuous at $p$. According to Theorem \ref{thm:LLP}, $\Sol_{P}$ is locally upper-H\"{o}lder stable at $p$.
 \qed
\end{proof}

\begin{theorem}
Assume that $K$ is semi-algebraic, $0\in K$, and $P$ is copositive on $K$. Let $p\in\inte(\Sc^*)$ be given. Then there exist constants $\varepsilon>0, L>0$ and $c>0$ with the following property: If $Q\in\Po_d$, $Q$ is copositive on $K$, and $q\in\R^n$ satisfy
	\begin{equation*}\label{eps}
	\max\{\|Q-P\|,\|q- p\|\}<\varepsilon,
	\end{equation*}
	then $\Sol(K,Q+q)$  is nonempty and bounded; and	
		\begin{equation}\label{ell}\Sol(K,Q+q)\subset \Sol(K,P+p)+L(\|Q-P\|+\|q-p\|)^{c}\Bo(0,1).	\end{equation}
\end{theorem}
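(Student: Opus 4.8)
The plan is to read the statement as a uniform-in-$Q$ strengthening of Proposition~\ref{Holder}, and to prove it in three stages: uniform solvability and boundedness, upper semicontinuity of a two-parameter solution map, and a semi-algebraic H\"older estimate. For the first stage I would show that there are $\varepsilon_0>0$ and $M>0$ such that $\Sol(K,Q+q)$ is nonempty and contained in $M\Bo(0,1)$ whenever $Q\in\Po_d$ is copositive on $K$, $q\in\R^n$, and $\max\{\|Q-P\|,\|q-p\|\}<\varepsilon_0$. Both assertions follow from the truncation/normalization argument already used for Theorems~\ref{cop1} and \ref{bounded2}, now carried out simultaneously in $Q$ and $q$. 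If it failed, there would be copositive $Q^k\to P$, $q^k\to p$, and points $x^k\in\Sol(K,Q^k+q^k)$ — or, for the existence part, $x^k\in\Sol(K_k,Q^k+q^k)$ on the truncations $K_k=\{x\in K:\|x\|\le k\}$ supplied by Theorem~\ref{thm:HS} — with $\|x^k\|\to\infty$; passing to $\bar x=\lim\|x^k\|^{-1}x^k$, dividing the variational inequality by $\|x^k\|^{d+1}$ and using that the coefficient matrices of $Q^k$ converge to that of $P$ (so $\|x^k\|^{-d}Q^k(x^k)\to P^\infty(\bar x)$, regardless of any drop of $\deg Q^k$ below $d$ — this is the point of Remark~\ref{P_infty}), together with the copositivity of $Q^k$ and the test vectors $y=0$ and $y=\|x^k\|\,v/\|v\|$ with $v\in K^\infty$, forces $\bar x\in\Sc$, $\|\bar x\|=1$, and $\langle p,\bar x\rangle\le0$, contradicting $p\in\inte(\Sc^*)$. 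No $R_0$-property of $(K,P)$ is used here; the pointedness hidden in $p\in\inte(\Sc^*)$ plays its role.

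For the second stage, let $\C\subset\Po_d$ be the set of polynomial maps copositive on $K$ and consider the map $\Sigma\colon\C\times\R^n\rightrightarrows\R^n$, $(Q,q)\mapsto\Sol(K,Q+q)$. As in Remark~\ref{cl_graph}, $\gph\Sigma$ is closed (limits of solutions are solutions, by continuity of $(Q,q,x)\mapsto Q(x)+q$ and closedness of $K$); by the first stage it is locally bounded at $(P,p)$, and $\Sigma(P,p)=\Sol(K,P+p)\neq\emptyset$ by Theorem~\ref{cop1}; hence $\Sigma$ is upper semicontinuous at $(P,p)$. Moreover $\C$ is semi-algebraic, being described by $\forall x\,(x\in K\Rightarrow\langle Q(x),x\rangle\ge0)$, which the Tarski--Seidenberg theorem renders semi-algebraic in the coefficients of $Q$; and, $K$ being semi-algebraic, the same quantifier elimination applied to the variational inequality shows that $\gph\Sigma$ is semi-algebraic.

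For the third stage I would invoke the mechanism behind Theorem~\ref{thm:LLP} — the \L{}ojasiewicz inequality for semi-algebraic multifunctions of \cite{LLP2018} — in the form: a closed, locally bounded, semi-algebraic multifunction that is upper semicontinuous at a point of its domain with nonempty value there is locally upper-H\"older stable at that point. Applied to $\Sigma$ at $(P,p)$, this yields $\varepsilon\in(0,\varepsilon_0]$, $L>0$, $c>0$ with $\Sigma(Q,q)\subset\Sigma(P,p)+L\,(\|Q-P\|+\|q-p\|)^{c}\Bo(0,1)$ for all $(Q,q)\in\C$ with $\max\{\|Q-P\|,\|q-p\|\}<\varepsilon$ (using $\max\{a,b\}\le a+b\le2\max\{a,b\}$ to pass between the product norm and the sum), which is exactly \eqref{ell}; the nonemptiness and boundedness of $\Sol(K,Q+q)$ come from the first stage.

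The main obstacle is this last stage: Theorem~\ref{thm:LLP} as stated covers only the constant-vector perturbation map $\Sol_P$, whereas here the perturbation $Q-P$ is a genuine polynomial map of degree at most $d$, so one must either cite the general semi-algebraic upper-H\"older stability result of \cite{LLP2018} directly or re-run its curve-selection proof for the two-parameter map $\Sigma$ above; I would phrase the argument so that it only depends on $\gph\Sigma$ being semi-algebraic and on the upper semicontinuity established in the second stage. A secondary, more routine difficulty is the bookkeeping in the first stage when $\deg Q<d$, which is settled by the coefficient-convergence observation (so that one always normalizes by $\|x^k\|^{d+1}$ with $d=\deg P$) rather than by working with the leading terms $Q^\infty$.
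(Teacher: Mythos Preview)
Your proposal is correct in outline and your first stage matches the paper's opening argument essentially (the paper phrases the contradiction via Theorem~\ref{cop1}, observing that if $\Sol(K,Q^k+q^k)$ is empty or unbounded then $q^k\notin\inte(\Sol(K^\infty,(Q^k)^\infty)^*)$ and picking a witness $x^k$ in that recession solution set, but the normalization endpoint is the same).

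The genuine divergence is in your third stage. You aim to apply a two-parameter semi-algebraic upper-H\"older principle to $\Sigma:(Q,q)\mapsto\Sol(K,Q+q)$ and correctly flag that Theorem~\ref{thm:LLP}, as stated here, covers only the constant-vector perturbation $\Sol_P$; you would need to import or reprove the general version from \cite{LLP2018}. The paper avoids this obstacle entirely with a substitution trick: for any $z_q\in\Sol(K,Q+q)$ set $\widehat q:=q+(Q-P)(z_q)$; then $P(z_q)+\widehat q=Q(z_q)+q$, so $z_q\in\Sol(K,P+\widehat q)$, and the two-parameter problem collapses to the one-parameter map $\Sol_P$. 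The uniform bound from the first stage gives $\|(Q-P)(z_q)\|\le\beta\|Q-P\|$, hence $\|\widehat q-p\|\le(1+\beta)\max\{\|Q-P\|,\|q-p\|\}$, and Proposition~\ref{Holder} (i.e.\ Theorem~\ref{thm:LLP} in its stated one-parameter form) finishes the inclusion~\eqref{ell}. So the paper's route is more elementary and self-contained---no appeal beyond the $\Sol_P$ case is needed---while yours is conceptually cleaner but buys that cleanliness at the price of the stronger external input you identified.
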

\begin{proof}  We first prove that there exists $\delta>0$ such that if $Q\in \Po_d$, where $Q$ is copositive on $K$, and $ p\in\R^n$ with \begin{equation}\label{delta}
	\max\{\|Q-P\|,\|q- p\|\}<\delta,
	\end{equation}	 
then $\Sol(K,Q+q)$ is nonempty and bounded. Suppose that the assertion is false. Then there is a sequence $\{(Q^k,q^k)\}\subset \Po_{d}\times\R^n$ such that $(Q^k,q^k) \to (P,p)$, where $Q^k$ is copositive on $K$ for each $k$, and $\Sol(K,Q^k+q^k)$ is empty or unbounded. Due to Theorem \ref{cop1}, one has
 $$q^k\notin\inte(\Sol((Q^k)^{\infty},K^{\infty})^*).$$
 This means that there exists $x^k\in\Sol((Q^k)^{\infty},K^{\infty})$ satisfying $x^k\neq 0$ and $\left\langle x^k,q^k \right\rangle\leq 0$. We can assume that $\|x^k\|^{-1}x^k\to\bar x\in K^{\infty}$ with $\|\bar x\|=1$. 
 
It is not difficult to see that $\left\langle \bar x,p \right\rangle\leq 0$.
If we prove that $\bar x\in\Sc$, then this contradicts the assumption $p\in\inte(\Sc^*)$; and hence $\rm(a)$ will be proved. Thus, we only need to show that $\bar x\in \Sc$. From \eqref{CP}, one has
\begin{equation}\label{Qk}
\langle (Q^k)^{\infty}(x^k),x^k\rangle=0, \ \langle
(Q^k)^{\infty}(x^k),y\rangle\geq 0 \  \forall y\in K^{\infty}.
\end{equation}
It follows from Remark \ref{P_infty} that $(Q^k)^{\infty} \to P^{\infty}$. Let $y\in K^{\infty}$ be fixed; by dividing the equation and inequality in \eqref{Qk} by $\|x^k\|^{d+1}$ and $\|x^k\|^{d}$, respectively, and letting $k\to+\infty$, we obtain $$\langle P^{\infty}(\bar x),\bar x\rangle=0, \ \langle
 P^{\infty}(\bar x),y\rangle\geq 0.$$
As this holds for every $y\in K^{\infty}$, we get $\bar x\in\Sc$.
 
We now prove the inclusion \eqref{ell}. According to Proposition \ref{Holder}, there exist $L_0>0,c>0$ and $\varepsilon$ such that 
	\begin{equation}\label{ell0}
	\Sol(K,P+q)\subset \Sol(K,P+p)+L_0\|q-p\|^{c}\Bo(0,1)
	\end{equation}
	for all $q$ satisfying $\|q-p\|< \varepsilon$. 

Suppose $Q$ (copositive on $K$) and $q$ (in $\R^n$) satisfy \eqref{delta}. As $\Sol(K,Q+q)$ is nonempty, for each $z_q\in \Sol(K,Q+q)$, by setting 
	\begin{equation}\label{q_hat} \widehat q:=q+\left(Q- P\right)(z_q), \end{equation}
	we have
	$P(z_q)+ \widehat q=Q(z_q)+q$ and 
	$$\left\langle P(z_q)+ \widehat q, y-z_q\right\rangle=\left\langle Q(z_q)+q,y-z_q\right\rangle  \geq 0 \ \forall y\in K.$$ 
This gives \begin{equation}\label{z_q}
z_q\in\Sol(K,P+\widehat q).
\end{equation} 
Since  $\Sol(K,Q+q)$ is compact, there exists $\beta>0$ such that 
	\begin{equation}\label{norm_ineq}
	\|(Q-P)(z)\|\leq \beta\|Q-P\|
	\end{equation}
	for all $z\in\Sol(K,Q+q)$.  
	From \eqref{q_hat}, \eqref{norm_ineq}, and \eqref{delta},  we get
	$$\begin{array}{ll}
	\|\widehat q-p\|   &\leq  \; \|\widehat q-q\|+\|q-p\| \smallskip \\
	&\leq  \; \|(Q-P)(z_q)\|+\|q-p\| \smallskip \\ 
	&\leq  \; \beta \|Q-P\|+\|q-p\| \smallskip \\ 
	&\leq \; (1+\beta)\delta.
	\end{array}$$
	Choosing $\delta$ small enough such that $(1+\beta)\delta<\varepsilon$,  we have $\|\widehat q-p\|<\varepsilon$.	From \eqref{ell0}, \eqref{z_q}  and \eqref{norm_ineq}, there exists $x\in\Sol(K,P+p)$ such that
	$$\begin{array}{rl}
	\|z_q-x\| \; & \leq \; L_0\|\widehat q-p\|^{c} \smallskip\\
	&\leq  \; L_0\left(\|q-p\|+\beta \|Q-P\| \right)^{c} \smallskip    \\ 
	& \leq  \; L \left(\|q-p\|+\|Q-P\| \right)^{c},
	\end{array} $$
	where $L:=\max\left\lbrace L_0^{c}\beta,L_0^{c}\right\rbrace $. 
	
Since the inequality holds for any $z_q$ in $\Sol(K,Q+q)$, the  inclusion \eqref{ell} holds.  \qed
\end{proof}

\subsection{The GUS-property}

If $\VI(K,P+p)$ has a unique solution for every $p\in\R^n$, then $\VI(K,P)$ is said to have the globally uniquely solvable property (GUS-property).  The following theorem develops Theorem 4.3 in \cite{WHQ18} which concerning the GUS-property of tensor variational inequalities. 

\begin{proposition}\label{GUS1} 
	Assume that $0 \in K$ and $P$ is strictly monotone on $K$. If $(K,P)$ is an $R_0$-pair, then $\VI(K,P)$ has the GUS-property and $\Sol_{P}$ is single-valued and continuous on $\R^n$.
\end{proposition}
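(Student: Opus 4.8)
The plan is to treat uniqueness and existence separately and then read off continuity.

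For uniqueness, note first that for every $p\in\R^n$ the shifted map $P+p$ is again strictly monotone on $K$, since $\langle(P+p)(y)-(P+p)(x),y-x\rangle=\langle P(y)-P(x),y-x\rangle$. Then the standard argument applies: if $x_1,x_2$ both solve $\VI(K,P+p)$, testing the variational inequality at $x_1$ with $y=x_2$ and at $x_2$ with $y=x_1$ and adding the two inequalities gives $\langle(P+p)(x_2)-(P+p)(x_1),x_2-x_1\rangle\le 0$, which by strict monotonicity forces $x_1=x_2$. Hence $\VI(K,P+p)$ has at most one solution for each $p$.

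For existence I would reduce to the copositive situation already covered by Corollary \ref{cor:copo}. Put $\tilde P:=P-P(0)$; then $\tilde P(0)=0$, $\tilde P$ is strictly monotone on $K$, and since $0\in K$ it follows that $\tilde P$ is copositive on $K$. Moreover $\tilde P-P=-P(0)$ is a constant, hence lies in $\Po_{d-1}$, so Remark \ref{P_Q} yields $(\tilde P)^{\infty}=P^{\infty}$ and, because $(K,P)$ is an $R_0$-pair, $(K,\tilde P)$ is an $R_0$-pair as well. Applying Corollary \ref{cor:copo}(a) to $\tilde P$ shows that $\Sol(K,\tilde P+q)$ is nonempty (and bounded) for every $q\in\R^n$. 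Since $\VI(K,P+p)$ coincides with $\VI(K,\tilde P+(P(0)+p))$, i.e. $\Sol_{P}(p)=\Sol_{\tilde P}(P(0)+p)$, the set $\Sol(K,P+p)$ is nonempty for every $p$; combined with the uniqueness above, $\VI(K,P)$ has the GUS-property and $\Sol_{P}$ is single-valued on $\R^n$.

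Finally, continuity is immediate from upper semicontinuity plus single-valuedness. Since $(K,P)$ is an $R_0$-pair and $\Sol(K,P+p)\neq\emptyset$ for every $p$, Corollary \ref{usc_3} gives that $\Sol_{P}$ is upper semicontinuous at each $p\in\R^n$ (equivalently, this follows from Corollary \ref{cor:copo}(b) applied to $\tilde P$ and composed with the translation $p\mapsto P(0)+p$). For a single-valued map $\Sol_{P}(p)=\{f(p)\}$, upper semicontinuity at $p$ says exactly that for every open $V\ni f(p)$ there is a neighbourhood $U$ of $p$ with $f(U)\subset V$, i.e. $f$ is continuous at $p$; hence $\Sol_{P}$ is single-valued and continuous on $\R^n$. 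The one step that is not purely mechanical is the passage from $P$ to $\tilde P=P-P(0)$: one must observe that subtracting the constant $P(0)$ preserves the leading term (so the $R_0$-property survives, via Remark \ref{P_Q}) while producing a map that vanishes at the origin (so it is copositive on $K$ by virtue of $0\in K$ and monotonicity), after which Corollary \ref{cor:copo} and the elementary monotonicity arguments do the rest.
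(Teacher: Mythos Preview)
Your proof is correct and follows essentially the same route as the paper: both shift $P$ by the constant $P(0)$ to obtain a map (the paper's $Q$, your $\tilde P$) that vanishes at $0$ and is therefore copositive on $K$, invoke Remark~\ref{P_Q} to preserve the $R_0$-property, apply Corollary~\ref{cor:copo}(a) for existence, use strict monotonicity for uniqueness, and then deduce continuity from Corollary~\ref{usc_3} together with single-valuedness. The only cosmetic differences are that you spell out the uniqueness argument directly while the paper cites \cite[Theorem 2.3.3]{FaPa03}, and you make the implication ``upper semicontinuous $+$ single-valued $\Rightarrow$ continuous'' explicit.
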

\begin{proof} Suppose that $(K,P)$ is an $R_0$-pair. By the monotonicity of $P$, $Q(x):=P(x)-P(0)$ is monotone on $K$. Because $0$ belongs to $K$, the map $Q$ is copositive on $K$. From Remark \ref{P_Q},
	$(K,Q)$ is an $R_0$-pair. According to the assertion $\rm(a)$ in Corollary \ref{cor:copo}, the set $$\Sol(K,Q+q)=\Sol(K,P-P(0)+q)$$ 
	is nonempty for all $q\in\R^n$. This is equivalent to saying that $\Sol(K,P+p)\neq \emptyset$ for all $p\in\R^n$. 	
	Since $P$ is strictly monotone, so is $P+p$.  According to \cite[Theorem 2.3.3]{FaPa03}, $\VI(K,P+p)$ has at most one solution. Thus, $\VI(K,P+p)$ has a unique solution for every $p\in\R^n$. 
	
	Because $\VI(K,P)$ has the $R_0$-property, Corollary \ref{usc_3} says that $\Sol_{P}$ is upper semicontinuous on $\R^n$. Hence, the map is single-valued and continuous on $\R^n$.	
	The proof is complete. \qed
\end{proof}

To illustrate Proposition \ref{GUS1}, we give the following example.

\begin{example}
	Consider the polynomial variational inequality given by 
	$$K=\{x=(x_1,x_2)\in\R^2:x_1\geq 0\}, \ P(x)+p=\begin{bmatrix}
	x_1^3\\
	x_2^3
	\end{bmatrix}+\begin{bmatrix}
	p_1\\
	p_2
	\end{bmatrix},$$
	where $(p_1,p_2)\in\R^2$. It is easy to check that $0\in K$, $P$ is strictly monotone on $K$,  and $(K,P)$ is an $R_0$-pair. According to Proposition \ref{GUS1}, the problem has the GUS-property and $\Sol_{P}$ is single-valued and continuous on $\R^2$. In fact, an easy computation shows that
	$$\Sol_{P}(p_1,p_2)=\left\{\begin{array}{cl}
	\left\lbrace(\sqrt[3]{-p_1},\sqrt[3]{-p_2})\right\rbrace & \text{ if } p_1<0, \\
	\left\lbrace (0,\sqrt[3]{-p_2} )\right\rbrace & \text{ if } p_1\geq 0.
	\end{array}\right.$$
	This map is single-valued and continuous on $\R^2$. 
\end{example}

\section{Genericity}

In this section, we first prove the genericity of the $R_0$-property of polynomial variational inequalities under some mild
conditions. Then, we show that the solution map $\Sol$ is finite-valued on a generic semi-algebraic set of the parametric space. 

\subsection{Genericity of the $R_0$-property}

Let $\Hd_d$ be the vector space spanned by polynomial maps $H=(H_1,\dots,H_n)$, where all $H_l$ are homogeneous  of degree $d$. The dimension of $\Hd_d$ is denoted by $\rho$. Clearly, $\R^{n\times\rho}$ and  $\Hd_d$  are isomorphic. Let $X_d$ be a vector whose components are monomials of degree $d$ listed by lexicographic ordering
$$X_d=\left(x_1^d,x_1^{d-1}x_2,\dots,x_n^d\right)^T.$$
For any homogeneous polynomial map $H\in \Hd_d$, there is a unique $B\in \R^{n\times\rho}$,
$$B=\begin{bmatrix}
b_{11}& b_{12} &\cdots & b_{1\rho}  \\ 
b_{21}& b_{22}  &\cdots & b_{2\rho}  \\ 
\vdots & \vdots  & \ddots & \vdots \\ 
b_{n1}& b_{n2} &\cdots & b_{n\rho} 
\end{bmatrix},$$
such that $H(x)=BX_d$, where $$H_l(x)=b_{l1}x_1^d+b_{l2}x_1^{d-1}x_2+\dots+b_{l\rho}x_n^d \; (l=1,\dots,n).$$

Assume that $K$ is an unbounded polyhedral convex cone, which is the intersection of finitely many half-spaces containing the origin, given by   
\begin{equation}\label{K_0}
K = \left\lbrace x \in \R^n : Cx \leq 0\right\rbrace,
\end{equation}
where $C=(c_{ij})\in {\mathbb R}^{s\times n}$. The following lemma shows that the solution map of homogeneous polynomial complementarity problems,
$$\Gamma_K:\R^{n\times\rho}\rightrightarrows\R^n, \ B\mapsto \Gamma_K(B)= \Sol(K,BX_d),$$
is constant on a generic semi-algebraic set of $\R^{n\times\rho}$ provided that  the $\rm LICQ$ holds on $K$.

\begin{lemma}\label{generic_1} Assume that $K$ is a polyhedral convex cone given by \eqref{K_0} and the $\rm LICQ$ holds on $K$. Then there exists a  generic semi-algebraic set $\Sa\subset\R^{n\times\rho}$ such that $\Gamma_K(B)=\{0\}$ for any $B\in\Sa$.
\end{lemma}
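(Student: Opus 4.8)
The plan is to use the decomposition of $\Sol(K, BX_d)$ according to the pseudo-faces of $K$ together with the Sard theorem with parameter (the parametric transversality argument recalled in the excerpt). Since $K$ is a polyhedral convex cone given by $Cx \le 0$, its pseudo-faces $K_\alpha$ (indexed by $\alpha \subset \{1,\dots,s\}$) are the relatively open faces, each of which is an open subset of the linear subspace $L_\alpha := \{x : c_i x = 0,\ i \in \alpha\}$, where $c_i$ denotes the $i$-th row of $C$. By \eqref{Sol_decom}, $\Sol(K, BX_d) = \bigcup_{\alpha} \big[\Sol(K, BX_d) \cap K_\alpha\big]$, a finite union, so it suffices to produce, for each fixed $\alpha$, a generic semi-algebraic set $\Sa_\alpha \subset \R^{n\times\rho}$ on which $\Sol(K, BX_d) \cap K_\alpha \subset \{0\}$; the desired $\Sa$ is then the (finite) intersection of the $\Sa_\alpha$, which is still generic and semi-algebraic, and on $\Sa$ one has $\Sol(K,BX_d) = \{0\}$ because $0$ always solves the homogeneous complementarity problem.

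The key step is to fix $\alpha$ with $0 \notin K_\alpha$ relatively open, and to set up, on the manifold $X := L_\alpha \setminus \{0\}$ (or rather the subset of $K_\alpha$), a semi-algebraic differentiable map $\Phi : \R^{n\times\rho} \times X \to \R^N$ whose zero set, intersected with $\{B\} \times X$, is exactly $\Sol(K, BX_d) \cap K_\alpha$. Here I would use the KKT characterization \eqref{CP_sys1}: since the $\rm LICQ$ holds on $K$, a point $x \in K_\alpha$ solves $\CP(K, BX_d)$ iff there exist multipliers $\lambda_i \ge 0$, $i \in \alpha$, with $BX_d(x) = \sum_{i\in\alpha} \lambda_i c_i^T$ and $\langle BX_d(x), x\rangle = 0$; on $K_\alpha$ the latter orthogonality plus $c_i x = 0$ for $i \in \alpha$ forces $\langle BX_d(x), x\rangle = \sum \lambda_i (c_i x) = 0$ automatically, so the binding equations are $c_i x = 0$ for $i \in \alpha$ and the requirement that $BX_d(x)$ lie in the cone generated by $\{c_i^T : i \in \alpha\}$, which by $\rm LICQ$ is a relatively open condition cut out by finitely many equalities (the projection of $BX_d(x)$ onto $L_\alpha$ must vanish). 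Concretely, let $M_\alpha$ be a matrix whose rows span $L_\alpha$ (so $M_\alpha x$ ranges over $\R^{\dim L_\alpha}$ as $x$ ranges over $L_\alpha$); then the defining map on $X \subset L_\alpha$ is $\Phi(B, x) = M_\alpha\, BX_d(x) \in \R^{\dim L_\alpha}$, together with $x \in L_\alpha$. One checks that $0$ is a regular value of $\Phi$ as a function of $(B,x)$: the partial derivative in $B$, applied to a perturbation direction $B'$, gives $M_\alpha B' X_d(x)$, and since $x \ne 0$ one can choose $B'$ so that $B' X_d(x)$ is an arbitrary vector in $\R^n$ (the monomial vector $X_d(x)$ is nonzero), hence $M_\alpha B' X_d(x)$ is arbitrary in $\R^{\dim L_\alpha}$; surjectivity of $D\Phi$ follows. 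Applying the Sard theorem with parameter yields a generic semi-algebraic $\Sa_\alpha \subset \R^{n\times\rho}$ such that for every $B \in \Sa_\alpha$, $0$ is a regular value of $\Phi_B : x \mapsto \Phi(B,x)$ on $X$; then by the regular level set theorem, $\Phi_B^{-1}(0) = \Sol(K,BX_d) \cap K_\alpha$ is either empty or zero-dimensional. But it is a cone (a solution set of a homogeneous complementarity problem, intersected with a cone $K_\alpha$), and a zero-dimensional cone must be empty or $\{0\}$; since $0 \notin K_\alpha$ when $\alpha \ne \emptyset$ (strict inequalities $c_i x < 0$ for $i \notin \alpha$ fail at $x=0$ unless $\alpha = \{i : c_i \cdot 0 = 0\} = \{1,\dots,s\}$, and $K$ being a pointed-enough cone under $\rm LICQ$ handles the top face), it is in fact empty. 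The remaining face $\alpha = I$, i.e. $x = 0$, contributes only $\{0\}$.

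The main obstacle I anticipate is bookkeeping the case analysis on pseudo-faces cleanly — in particular making precise, for each $\alpha$, that the "solves $\CP(K,\cdot)$" condition restricted to the relatively open face $K_\alpha$ reduces exactly to the finitely many polynomial equations $M_\alpha BX_d(x) = 0$ (with the multiplier-sign and complementarity conditions becoming either vacuous or open, hence stable), and verifying that the resulting $\Phi$ is indeed a differentiable semi-algebraic map of the right dimensions so that both the parametric Sard theorem and the regular level set theorem apply. A secondary point requiring care is that the union over $\alpha$ and the intersection of the finitely many generic sets $\Sa_\alpha$ remains generic and semi-algebraic, which is immediate since finite intersections of sets whose complements have dimension $< n\rho$ again have complement of dimension $< n\rho$, and finite unions/intersections of semi-algebraic sets are semi-algebraic.
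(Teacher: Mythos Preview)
Your strategy is essentially the paper's: decompose $\Sol(K,BX_d)$ over pseudo-faces $K_\alpha$, set up a KKT-type semi-algebraic map, apply the parametric Sard theorem and the regular level set theorem to get zero-dimensional fibers on a generic $\Sa_\alpha$, intersect, and finish with the observation that a finite cone containing $0$ equals $\{0\}$. The one technical difference is that the paper keeps the multipliers as extra variables, working with
\[
\Phi_\alpha:\R^{n\times\rho}\times K_\alpha\times\R_+^{|\alpha|}\to\R^{n+|\alpha|},\qquad
\Phi_\alpha(B,x,\lambda_\alpha)=\Big(BX_d+\sum_{i\in\alpha}\lambda_i C_i,\;C_\alpha x\Big),
\]
and then projects $\Phi_{\alpha,B}^{-1}(0)$ to $K_\alpha$, whereas you eliminate the multipliers by projecting $BX_d(x)$ onto $L_\alpha$. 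Both encodings lead to the same dimension count.

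Two small inaccuracies to fix (neither is a genuine gap). First, your asserted equality $\Phi_B^{-1}(0)=\Sol(K,BX_d)\cap K_\alpha$ is only the containment $\supset$: the condition $M_\alpha BX_d(x)=0$ says $BX_d(x)\in\operatorname{span}\{c_i^T:i\in\alpha\}$, not that the coefficients have the required sign. This is harmless, since your $\Phi_B^{-1}(0)$ is itself positive-scale-invariant (by homogeneity of $X_d$ and of $K_\alpha$), so finiteness still forces it---and hence the smaller set $\Sol(K,BX_d)\cap K_\alpha$---to be empty. Second, $\rm LICQ$ does \emph{not} make the top face trivial: for $\alpha=I$ one has $K_I=\ker C$, which has dimension $n-s>0$ whenever $s<n$ (e.g.\ $K$ a half-space). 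You therefore cannot dismiss $\alpha=I$ as ``$x=0$''; rather, run the same argument on $K_I\setminus\{0\}$ (where $X_d\neq 0$), exactly as for the other faces. The paper handles this uniformly by taking all pseudo-faces $K_\alpha\neq\{0\}$ and only invoking the cone property at the very end, on the full union $\Gamma_K(B)$.
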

\begin{proof} Firstly, 	since the $\rm LICQ$ holds on $K$, applying  \cite[Proposition 1.3.4]{FaPa03} for the $\VI(K,H)$,	we have $x\in\Sol(K,H)$ if and only if there exists $\lambda\in \R^{s}$ such that 
	\begin{equation}\label{KKT}
	\left\lbrace \begin{array}{l}
	H(x)+C^T\lambda=0,\\ 
	\lambda^T(Cx)=0, \; \lambda\geq 0, \; Cx \leq 0.
	\end{array}\right. 
	\end{equation}

Let $K_{\alpha}\neq\{0\}$ be a nonempty pseudo-face of $K$, given by 
$$K_\alpha=\big\{x\in {\mathbb R}^n:C_{i}x=0\ 
\forall i\in\alpha,\ C_{i}x< 0\ \forall i\in I\setminus\alpha\big\},$$ 
where $C_{i}$ is the $i$-th row of $C$. Thus, $X_d$ is nonzero on this pseudo-face.	We consider the function
$$\Phi_{\alpha}:\R^{n\times \rho}\times K_{\alpha}\times \R_+^{|\alpha|} \to \R^{n+|\alpha|},$$
which is defined by 
$$\Phi_\alpha(B,x,\lambda_\alpha)=\Big( BX_d+\displaystyle\sum_{i\in \alpha}\lambda_i C_i, C_\alpha x\Big),$$
	where
$C_\alpha x=(C_{i_1}x,\ldots,C_{i_{|\alpha|}x}), i_j\in\alpha.$ Clearly, $\Phi_\alpha$ is a smooth
semi-algebraic function.
	The Jacobian matrix 
	of $\Phi_\alpha$ is given by
$$D\Phi_\alpha=\left[ \begin{array}{c|c|c}
D_{B}(BX_d) \; & \ * \ &\  C_\alpha^T   \\  
\hline
0_{|\alpha|\times \rho}&\ \ \ C_\alpha \ \ \ & 0_{|\alpha|\times |\alpha|} \\ 
\end{array}\right],$$
where $0_{u\times v}$ is the zero $u\times v$--matrix. Here the $n\times \rho$--matrix  $D_{B}(BX_d)$  is given by 
	$$D_{B}(BX_d)=\begin{bmatrix}
	X_d	&0_{1\times \rho}  & \cdots &0_{1\times \rho}  \\ 
	0_{1\times \rho} 	&X_d	  & \cdots &0_{1\times \rho} \\ 
	\vdots & \vdots & \ddots & \vdots \\ 
	0_{1\times \rho} 	&0_{1\times \rho}   & \cdots & X_d	
	\end{bmatrix}.$$
	
	Since $X_d$ is nonzero on $K_{\alpha}$, the rank of $D_{B}(BX_d)$ is $n$. 	By our assumptions, the rank of the matrix $D\Phi_\alpha$ is $n+|\alpha|$ for all $x\in K_\alpha$. Therefore,	$0\in \R^{n+|\alpha|}$ is a regular value of $\Phi_\alpha$. The Sard theorem with parameter \cite[Theorem 2.4]{DHP16} says that there exists a generic semi-algebraic set  $\Sa_{\alpha}\subset \R^{n\times\rho}$,  such that if $B\in \Sa_{\alpha}$ then $0$ is a regular value of the map
	$$\Phi_{\alpha,B}:K_{\alpha}\times \R^{|\alpha|} \to \R^{n+|\alpha|}, \ \Phi_{\alpha,B}(x,\lambda_\alpha) =\Phi_\alpha(B,x,\lambda_\alpha).$$
According to the regular level set theorem \cite[Theorem 9.9]{Loring_2010}, if  $\Phi^{-1}_{\alpha,B}(0)$ 
	is nonempty then it is a $0-$dimensional semi-algebraic set. It follows that $\Phi^{-1}_{\alpha,B}(0)$ is a finite set. Moreover, from \eqref{KKT}, one has
	$$\Gamma_K(B)\cap K_{\alpha}=\pi(\Phi^{-1}_{\alpha,B}(0)),$$
	where $\pi$ is the projection $\R^{n+|\alpha|} \to \R^n$ defined by $\pi(x,\lambda_{\alpha}) = x$. Therefore, $\Gamma_K(B)\cap K_{\alpha}$ is a finite set.
	
Since $0\in \Gamma_K(B)$, $\Gamma_K(B)\cap K_{\alpha}=\{0\}$ if $K_{\alpha}=\{0\}$. By the finite decomposition \eqref{Sol_decom}, 
$$\Gamma_K(B)=\bigcup_{\alpha\subset I}\Gamma_K(B)\cap K_{\alpha}$$
  is a finite set. Taking $\Sa=\cap_{\alpha\subset I}\Sa_{\alpha},$ 
we see that $\Gamma_K(B)$ has finite points for any $B\in\Sa$. Recall that $\Gamma_K(B)$ is a closed cone which contains $0$. Hence, $\Gamma_K(B)=\{0\}$ for all $B$ in $\Sa$. 
	
	The proof is complete. \qed
\end{proof}

Consider the isomorphism $\varphi:\R^{n\times m} \to \Po_d$ defined by $\varphi(A)=AX$, where $X,A$ are as in \eqref{monomials} and \eqref{A}, respectively. A set
$\Sa$ is generic in $\R^{n\times m}$ iff $\varphi(\Sa)$ is generic in $\Po_d$.

\begin{remark}
Suppose that the constraint $K$ is given by \eqref{K}. For each $i\in I$, $j\in J$, one denotes
$K_i=\left\lbrace x \in \R^n : g_i(x)\leq 0\right\rbrace$ and $K'_j=\left\lbrace x \in \R^n : h_j(x)= 0\right\rbrace$. Clearly,	$K^{\infty}_i,K'^{\infty}_j$ are polyhedral convex cone (see \cite[p.39]{BK2002}). Since $$K=\big(\bigcap_{i\in I}K_i\big)\bigcap(\bigcap_{j\in J}K'_j\big),$$ it follows from \cite[Proposition 2.1.9]{AusTeb2003} that
$$K^{\infty}=\big(\bigcap_{i\in I}K^{\infty}_i\big) \bigcap \big(\bigcap_{j\in J}K'^{\infty}_j\big).$$
Thus, $K^{\infty}$ is a nonempty polyhedral convex cone.
\end{remark}

\begin{theorem}
Suppose that $K$ is given by \eqref{K} and $K^{\infty}$ is given by  \eqref{K_0}. If the $\rm LICQ$ holds on $K^{\infty}$, then the set $\Ri_0(K,d)$ is generic in $\Po_d$.
\end{theorem}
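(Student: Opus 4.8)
The plan is to reduce the genericity of $\Ri_0(K,d)$ in $\Po_d$ to the previously established Lemma \ref{generic_1}, which concerns the solution map $\Gamma_{K^\infty}$ of homogeneous polynomial complementarity problems on the polyhedral cone $K^\infty$. The key observation is that, by definition, $(K,Q)$ is an $R_0$-pair precisely when $\Sol(K^\infty,Q^\infty)=\{0\}$, so membership in $\Ri_0(K,d)$ depends on $Q$ only through its leading term $Q^\infty\in\Hd_d$, and $\Sol(K^\infty,Q^\infty)=\Gamma_{K^\infty}(B)$ where $B\in\R^{n\times\rho}$ is the coefficient matrix of $Q^\infty$. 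Thus I would set up the natural linear surjection $\ell:\Po_d\to\Hd_d$ sending $Q\mapsto Q^\infty$ (equivalently, the map on coefficient matrices $A\in\R^{n\times m}\mapsto B\in\R^{n\times\rho}$ that extracts the columns corresponding to the degree-$d$ monomials), and note that $\Ri_0(K,d)=\ell^{-1}\big(\{B\in\R^{n\times\rho}:\Gamma_{K^\infty}(B)=\{0\}\}\big)$.

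The main steps, in order, are as follows. First, invoke Lemma \ref{generic_1} (using the hypothesis that $K^\infty$ is given by \eqref{K_0} and the $\rm LICQ$ holds on $K^\infty$) to obtain a generic semi-algebraic set $\Sa\subset\R^{n\times\rho}$ with $\Gamma_{K^\infty}(B)=\{0\}$ for all $B\in\Sa$. Second, I would check that genericity is preserved under pulling back along the coordinate projection $\R^{n\times m}\to\R^{n\times\rho}$: if $\Sa$ is generic in $\R^{n\times\rho}$, then $\Sa\times\R^{n\times(m-\rho)}$ (after the appropriate reordering of coordinates) is generic in $\R^{n\times m}$, since the complement has dimension strictly less than $n\times m$ — a product of a set of dimension $<n\rho$ with $\R^{n(m-\rho)}$ has dimension $<nm$, and semi-algebraicity is clearly preserved. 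Third, translate this back to $\Po_d$ using the isomorphism $\varphi:\R^{n\times m}\to\Po_d$ and the remark in the excerpt that a set is generic in $\R^{n\times m}$ iff its image under $\varphi$ is generic in $\Po_d$; the pulled-back set maps exactly onto the set of $Q\in\Po_d$ whose leading term has coefficient matrix in $\Sa$, and all such $Q$ lie in $\Ri_0(K,d)$. Hence $\Ri_0(K,d)$ contains a generic set, so it is itself generic.

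The only genuinely delicate point is the second step — verifying that genericity (i.e., the complement having dimension strictly less than the ambient dimension, equivalently containing a countable intersection of dense open sets) is stable under taking the preimage under a coordinate projection. This is routine from the dimension theory of semi-algebraic sets: $\dim(S\times\R^k)=\dim S+k$, so if $\dim\big(\R^{n\times\rho}\setminus\Sa\big)<n\rho$ then $\dim\big((\R^{n\times\rho}\setminus\Sa)\times\R^{n(m-\rho)}\big)<nm$, and this product is exactly the complement of the pulled-back set. Everything else is bookkeeping: identifying $\Ri_0(K,d)$ with the pullback of $\{\Gamma_{K^\infty}=\{0\}\}$, and tracking the three isomorphisms ($\varphi$, the coefficient identification $\Hd_d\cong\R^{n\times\rho}$, and the leading-term projection). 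I do not expect any obstruction here; the substantive work was already done in Lemma \ref{generic_1}, and this theorem is essentially its corollary after the reduction $Q\rightsquigarrow Q^\infty$ and $K\rightsquigarrow K^\infty$.
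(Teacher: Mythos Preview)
Your proposal is correct and follows essentially the same approach as the paper. The paper invokes Lemma \ref{generic_1} to obtain a generic set $\A\subset\Hd_d$ on which $\Sol(K^\infty,H)=\{0\}$, then uses the direct-sum decomposition $\Po_d=\Hd_d\oplus\Po_{d-1}$ to assert that $\A\oplus\Po_{d-1}$ is generic in $\Po_d$ and contained in $\Ri_0(K,d)$; your preimage $\ell^{-1}(\Sa)$ under the leading-term projection is exactly this set, and your dimension argument makes explicit the step the paper simply asserts.
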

\begin{proof} Assume that $\rm LICQ$ holds on $K^{\infty}$. According to Lemma \ref{generic_1}, there exists a generic set $\A\subset \Hd_d$ such that the cone $\Sol(K^{\infty},H)$ is trivial, for all $H\in \A$.
From the direct sum $\Po_d=\Hd_d\oplus\Po_{d-1}$, where
$$\Hd_d\oplus\Po_{d-1}=\{H+Q \; | \; H\in\Hd_d, Q\in \Po_{d-1}\},$$ 
we can assert that $\A\oplus\Po_{d-1}$ 
is generic in $\Po_d$ and $(K,P)$ is an $R_0$-pair for any $P\in \A\oplus\Po_{d-1}$. It follows that $\Ri_0(K,d)$ is also generic in $\Po_d$. The proof is complete. \qed
\end{proof}

\subsection{Genericity of the finite-valuedness}

Recall that a set-valued map $\Psi$ is \textit{finite-valued} on $X$ if the cardinality of $\Psi(x)$ is finite, i.e. $|\Psi(x)|<+\infty$ for every $x\in X$. 

The finite-valuedness of $\Sol_{P}$ on a generic set of $\R^n$  was announced and proved in \cite[Theorem 3.2]{LLP2018}. The following theorem mention the genericity of the finite-valuedness of the map $\Sol$.

\begin{theorem}\label{generic_3}
Assume that $K$ is a semi-algebraic set given by \eqref{K} and the $\rm LICQ$ holds on $K$. Then, the map $\Sol$ is finite-valued on a generic set of $\Po_d$.
\end{theorem}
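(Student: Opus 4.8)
The plan is to mimic the strategy of Lemma \ref{generic_1}, but now working over the full parameter space $\Po_d\cong\R^{n\times m}$ rather than over homogeneous maps, and over the constraint set $K$ itself rather than its asymptotic cone. The key point is that finite-valuedness is a pseudo-face-by-pseudo-face property: by the finite decomposition \eqref{Sol_decom}, $\Sol(K,Q)=\bigcup_{\alpha\subset I}\bigl[\Sol(K,Q)\cap K_\alpha\bigr]$, so it suffices to produce, for each index set $\alpha\subset I$, a generic semi-algebraic set $\Sa_\alpha\subset\R^{n\times m}$ such that $\Sol(K,\varphi(A))\cap K_\alpha$ is finite whenever $A\in\Sa_\alpha$; then $\Sa:=\bigcap_\alpha\Sa_\alpha$ is generic (a finite intersection of generic semi-algebraic sets is generic), and $\varphi(\Sa)$ is the desired generic subset of $\Po_d$.

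First I would fix $\alpha\subset I=\{1,\dots,s\}$ and recall that on the pseudo-face $K_\alpha$ one has $g_i(x)=0$ for $i\in\alpha$, $g_i(x)<0$ for $i\notin\alpha$, and $h_j(x)=0$ for $j\in J$; since the $\rm LICQ$ holds on $K$, by \cite[Proposition 1.3.4]{FaPa03} a point $x\in K_\alpha$ solves $\VI(K,Q)$ iff there exist multipliers $\lambda_i\ge 0$ ($i\in\alpha$) and $\mu_j\in\R$ ($j\in J$) with
\begin{equation*}
Q(x)+\sum_{i\in\alpha}\lambda_i\nabla g_i(x)+\sum_{j\in J}\mu_j\nabla h_j(x)=0,\quad g_i(x)=0\ (i\in\alpha),\quad h_j(x)=0\ (j\in J).
\end{equation*}
This is a system of $n+|\alpha|+|J|$ equations in the unknowns $(x,\lambda_\alpha,\mu_J)$ living in the semi-algebraic manifold $M_\alpha:=K_\alpha\times\R_+^{|\alpha|}\times\R^{|J|}$ of dimension $n+|\alpha|+|J|$ (here $K_\alpha$ is a relatively open subset of the affine manifold cut out by $\{g_i=0,i\in\alpha\}\cup\{h_j=0,j\in J\}$, which has dimension $n-|\alpha|-|J|$ by the $\rm LICQ$). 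Define the smooth semi-algebraic map $\Phi_\alpha:\R^{n\times m}\times M_\alpha\to\R^{n+|\alpha|+|J|}$ by sending $(A,x,\lambda_\alpha,\mu_J)$ to the left-hand side of the three equation blocks above, with $Q=\varphi(A)=AX$.

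The crux is to verify that $0$ is a regular value of $\Phi_\alpha$, so that the Sard theorem with parameter \cite[Theorem 2.4]{DHP16} yields a generic semi-algebraic $\Sa_\alpha\subset\R^{n\times m}$ for which $0$ is a regular value of $\Phi_{\alpha,A}$; then the regular level set theorem \cite[Theorem 9.9]{Loring_2010} forces $\Phi_{\alpha,A}^{-1}(0)$ to be $0$-dimensional hence finite, and projecting onto the $x$-coordinate gives that $\Sol(K,\varphi(A))\cap K_\alpha$ is finite. Surjectivity of $D\Phi_\alpha$ at a zero follows by differentiating only in the $A$-directions: the block $D_A(AX)$ is the block-diagonal matrix with $X^T$ on the diagonal (exactly as in Lemma \ref{generic_1}, but now $X$ is the full monomial vector \eqref{monomials}, whose first entry is the constant $1$), so $D_A(AX)$ has rank $n$ everywhere — this already surjects onto the first block $\R^n$ — while the derivatives in the $\lambda$ and $\mu$ variables contribute the columns $\nabla g_i(x)$ and $\nabla h_j(x)$, and differentiating the constraint blocks $g_i(x),h_j(x)$ in $x$ gives a submatrix of full rank $|\alpha|+|J|$ precisely because the $\rm LICQ$ holds on $K$ (hence on $K_\alpha$). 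A short linear-algebra argument combining these two observations — the $A$-block handles the top $n$ rows for free, the $\rm LICQ$ handles the bottom $|\alpha|+|J|$ rows — shows $\rank D\Phi_\alpha=n+|\alpha|+|J|$ at every point of $M_\alpha$, so $0$ is trivially a regular value. I expect this regularity verification to be the main (though not deep) obstacle: one must be careful that the constant monomial $1\in X$ genuinely makes $D_A(AX)$ full row rank on all of $K_\alpha$ including any points where the nonconstant monomials degenerate, and that the pseudo-face $K_\alpha$ is honestly a smooth semi-algebraic manifold of the claimed dimension so that the dimension count in the regular level set theorem gives $0$. Finally, since $\Gamma$ is finite on each $\Sa_\alpha$-slice for finitely many $\alpha$, taking $\Sa=\bigcap_{\alpha\subset I}\Sa_\alpha$ and invoking \eqref{Sol_decom} completes the proof that $\Sol$ is finite-valued on the generic set $\varphi(\Sa)\subset\Po_d$. \qed
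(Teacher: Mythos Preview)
Your proposal is essentially the paper's own proof: define the KKT map $\Phi_\alpha$ on each pseudo-face, observe that $D_A(AX)$ has full row rank $n$ because the constant monomial $1$ sits in $X$, invoke the $\rm LICQ$ for the remaining $|\alpha|+|J|$ rows, then apply the parametric Sard theorem and the regular level set theorem exactly as in Lemma~\ref{generic_1}, and finish by intersecting over the finitely many $\alpha\subset I$.

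One bookkeeping slip to clean up: you declare $M_\alpha=K_\alpha\times\R_+^{|\alpha|}\times\R^{|J|}$ to have dimension $n+|\alpha|+|J|$, yet in the same sentence you say $K_\alpha$ has dimension $n-|\alpha|-|J|$; those two claims are incompatible, and the second one is fatal to the regularity argument as written (on the submanifold $K_\alpha$ the constraint block $(g_\alpha,h)$ is identically zero, so its derivative vanishes and $D\Phi_\alpha$ cannot surject). The fix---which is also what the paper tacitly does---is to let the $x$-variable range over the open set $\{x\in\R^n:g_i(x)<0,\ i\notin\alpha\}$ rather than over the pseudo-face $K_\alpha$ itself; then the domain genuinely has dimension $n+|\alpha|+|J|$, the equations $g_\alpha(x)=0$, $h(x)=0$ are nontrivial, and their $x$-Jacobian has rank $|\alpha|+|J|$ by the $\rm LICQ$, exactly as you go on to use.
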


\begin{proof} For $Q\in\Po_{d}$, one has $Q(x)=AX$, where $X$ and $A$ are given by \eqref{monomials} and \eqref{A}, respectively. For each nonempty pseudo-face $K_{\alpha}$ of $K$, we consider the function
	$$\Phi_\alpha:\R^{n\times m}\times K_{\alpha}\times \R^{|\alpha|+|J|} \to \R^{n+|\alpha|+|J|},$$
	which is defined by 
	$$\Phi_\alpha(A,x,\lambda_\alpha,\mu)=\Big(AX+\displaystyle\sum_{i\in \alpha}\lambda_i\nabla g_i(x)+\displaystyle\sum_{j\in J}\mu_i\nabla h_j(x), g_\alpha(x),h(x)\Big),$$
	where
	$g_\alpha(x)=(g_{i_1}(x),\dots,g_{i_{|\alpha|}}(x)), i_j\in\alpha$.	
An easy computation shows that the Jacobian matrix $D_{A}(AX)$ is the following $n\times m-$matrix
	\begin{equation}\label{Da_P}
	D_{A}(AX)=\left[ \begin{array}{cccc}
	X&0_{1\times m}&\cdots & 0_{1\times m} \\
	0_{1\times m}&X&\cdots & 0_{1\times m}\\
	\vdots&\vdots&\ddots& \vdots \\
	0_{1\times m}&0_{1\times m}&\cdots & X\\
	\end{array}\right].
	\end{equation}
From \eqref{Da_P}, the rank of $D_{A}(AX)$ is $n$.
Since $K$ has the $\rm LICQ$ property, the rank of the Jacobian matrix $D\Phi_{\alpha}$ is $n+|\alpha|+|J|$ for all $x\in K_\alpha$.

Repeating the argument of the proof of Lemma \ref{generic_1} (by using the Sard theorem with parameter and the regular level set theorem), one can assert that there exists a semi-algebraic set $\Sa$, which is  generic in $\R^{n\times m}$, such that $\Sol(K,AX)$  is a finite set for every $A\in\Sa$. Hence, we can conclude that there is a generic set in $\Po_d$ such that $\Sol$ is finite-valued on it.  \qed
\end{proof}

\begin{remark} Suppose that the assumptions in Theorem \ref{generic_3} are satisfied. We can show that if the  map $\Sol$  is lower semicontinous at $P$, then $\Sol(K,P)$ has finitely many points. Hence, if $\dim\Sol(K,P)\geq 1$, then $\Sol$ is not lower semicontinuous at $P$.
\end{remark}

\begin{acknowledgements}
The author would like to thank Prof. Nguyen Dong Yen,  Dr. Pradeep Kumar Sharma, Dr. Vu Thi Huong, and the first anonymous referee for their corrections. The author is indebted to the third anonymous referee for numerous valuable comments and suggestions.
\end{acknowledgements}


\end{document}